\newtheorem{theorem}{Theorem}[section]
\newtheorem{lemma}[theorem]{Lemma}
\newtheorem{definition}[theorem]{Definition}
\newtheorem{proposition}[theorem]{Proposition}
\newtheorem{obs}{Remark}[section]
\begin{document}

\title[Generalized Fractional Benjamin-Bona-Mahony Equation]{On the Stability of Solitary Wave Solutions for a Generalized Fractional Benjamin-Bona-Mahony Equation }
\subjclass[2000]{76B25, 35Q51, 35Q53.}

\keywords{Generalized Fractional Benjamin-Bona-Mahony equation, Orbital Stability, Spectral Instability, Solitary Waves.
}

\author{Goksu Oruc}

\address[G. Oruc]{Istanbul Technical University, Department of Mathematics, Maslak,
	Istanbul,  Turkey.}
\email{topkarci@itu.edu.tr }

\author{F\'abio Natali}

\address[F. Natali]{Department of Mathematics, State University of
	Maring\'a, Maring\'a, PR, Brazil. }
\email{fmanatali@uem.br }

\author{Handan Borluk}

\address[H. Borluk]{Ozyegin University, Department of Natural and Mathematical Sciences, Cekmekoy, Istanbul, Turkey. }
\email{handan.borluk@ozyegin.edu.tr }

\author{Gulcin M. Muslu}

\address[G. M. Muslu]{Istanbul Technical University, Department of Mathematics, Maslak,
	Istanbul,  Turkey.}
\email{gulcin@itu.edu.tr }

\maketitle

\begin{abstract}
In this paper we establish a rigorous spectral stability analysis for solitary waves associated to a generalized fractional Benjamin-Bona-Mahony type equation. Besides the well known smooth and positive solitary wave with large wave speed, we present the existence of smooth negative solitary waves having small wave speed. The spectral stability is then determined by analysing the behaviour of the associated linearized operator around the wave restricted to the orthogonal of the tangent space related to the momentum at the solitary wave. Since the analytical solution is not known, we generate the negative solitary waves numerically by using Petviashvili method. We also present some numerical experiments to observe the stability properties of solitary waves for various  values of the order of nonlinearity and fractional derivative. Some remarks concerning the orbital stability are also celebrated.
\end{abstract}

\renewcommand{\theequation}{\arabic{section}.\arabic{equation}}
\setcounter{equation}{0}
\section{Introduction}

In this paper we consider the stability properties of the solitary wave solutions for the generalized fractional Benjamin-Bona-Mahony (gfBBM) equation  given by
\begin{equation}
    u_t+  u_x + \frac{1}{2}(u^{p+1})_x+   \frac{3}{4}D^{\alpha} u_{x}+ \frac{5}{4}D^{\alpha} u_{t}=0. \label{gfBBM}
\end{equation}
Here $p$ is an integer and the operator $D^{\alpha} =(-
\partial_x^2)^{\frac{\alpha}{2}}$ denotes the Riesz potential of order $-\alpha$,
for any $\alpha \in \mathbb{R}$. The operator $D^{\alpha}$ is defined via
Fourier transform by
\begin{equation*}
  \widehat{D^{\alpha}q}(\xi)=|\xi|^{\alpha}\hat{q}(\xi),
\end{equation*}
where $\hat{q}$ is the Fourier transform in $L^2(\mathbb{R})$ of the function $q$.
The gfBBM equation admits the following conserved quantities
\begin{eqnarray}
% \nonumber % Remove numbering (before each equation)
  I(u)&=& \int_{-\infty}^{+\infty} u dx, \label{cq1} \\
  F(u)&=& \frac{1}{2} \int_{-\infty}^{+\infty} \left(u^2 + \frac{5}{4}|D^{\frac{\alpha}{2}}u |^2\right) dx, \label{cq2}\\
  H(u)&=&-\frac{1}{2}\int_{-\infty}^{\infty} \left({u^2}+ \frac{u^{p+2}}{p+2}+ \frac{3}{4}|D^{\frac{\alpha}{2}}u|^2 \right)dx.   \label{hamilton}
\end{eqnarray}
\noindent Equation \eqref{gfBBM} is derived to model the propagation of small amplitude long unidirectional waves in a non-locally and non-linearly elastic medium  \cite{EEE1,EEE2}.\\
\indent The Korteweg-de Vries (KdV) equation
\begin{equation*}
   u_t+  u_x + u u_x+u_{xxx}=0
\end{equation*}
is one of the most celebrated equations in  water wave theory.  Due to the shortcomings of the KdV equation, new approximate models are developed for water waves.  A well-known alternative model is the Benjamin-Bona-Mahony (BBM) equation
\begin{equation*}
   u_t+  u_x + u u_x-u_{xxt}=0
\end{equation*}
derived in \cite{benjamin2}. The dispersion relation for the linear BBM equation is the Pad\'{e} (0,2)-approximation to the full dispersion relation. To provide a better approximation of the dispersion relation for the small wave numbers, the Pad\'{e} (2,2)-approximation is used  to derive the equation,
\begin{equation}\label{pade}
  u_t+u_x+uu_x-\frac{9}{10}u_{xxx}-\frac{19}{10}u_{xxt}=0,
\end{equation}
which involves both linear terms for the KdV and BBM equations \cite{levy}.
The same equation is also established for Boussinesq-like equations with improved frequency dispersion   to model unidirectional surface water waves \cite{dingemans}. The authors in \cite{kalisch3} showed numerically that the solutions of the equation  \eqref{pade} provides a  better approximation to the solutions of the full Euler equations than either the KdV or BBM equations.

The effects of the relation between the nonlinearity and the dispersion
on the dynamics of solutions has been the focus of many studies. For this aim,
the fractional equations are  widely studied  in recent years  \cite{linares1,linares2,klein,pava,duran,natali, le}.
The fractional forms of the KdV equation and  the BBM equation are given as the fractional KdV (fKdV) equation
\begin{equation}\label{fkdv}
   u_t+  u_x + u^p u_x- D^{\alpha}u_x=0,
\end{equation}
 and  fractional BBM (fBBM) equation
 \begin{equation}\label{fbbm}
   u_t+  u_x + u^p u_x+ D^{\alpha}u_t=0,
\end{equation}
respectively.
The gfBBM equation involves the fractional terms of both the fKdV and fBBM equations. The efficiency of the equation \eqref{pade} makes the gfBBM equation more interesting. This paper gives an insight into the dynamics of the solutions for the gfBBM equation.

The local well-posedness of Cauchy problem for both equations $(\ref{fkdv})$ and $(\ref{fbbm})$ is investigated in \cite{abdelouhab} and \cite{linares1}. In the case $p=1$, the missing uniqueness result is recovered  for the equation $(\ref{fbbm})$ in \cite{he}. %The global well-posedness result for the fKdV equation is proved  in the energy space $H^{s/2}(\mathbb{R})$ with $\alpha> 6/7$ \cite{molinet}.
The existence and orbital stability of positive solitary wave solutions with the form $u(x,t)=Q_c(x-ct)$ for both fKdV and fBBM equations are discussed in \cite{linares2} and \cite{pava} using variational methods. The case $\alpha=2$ deserves to be highlighted as it corresponds to the classical KdV and BBM equations. Concerning the equation $(\ref{fkdv})$, the authors in \cite{bona} have exhibited a scenario for the orbital stability of solitary waves with {\rm sech}-type profile for $c>0$. In fact, orbital stability in the energy space $H^1(\mathbb{R})$ occurs for $1\leq p<4$ and orbital instability is determined for $p>4$. The case $p=4$ is well known as $L^2-$critical case and orbital instability results have been established in \cite{LG} and \cite{martel}. For the equation $(\ref{fbbm})$ and $c>1$, we have a similar scenario. Indeed, using suitable adaptations of the approach in \cite{bona}, the authors in \cite{strauss} have established the orbital stability of solitary waves with $sech-$profile for the case $1\leq p\leq4$. For the case $p>4$, the solitary waves are orbitally unstable for $1<c<c_0$ and orbitally stable for $c>c_0$. Here, $c_0$ is a critical value depending on $p$.

\indent The existence, uniqueness and spectral stability of periodic waves for the
fKdV and the fBBM equations  are also studied in \cite{le} and \cite{natali}.
%The stability results are conditional due to the use of local well-posedness.
The stability behaviour of the solutions changes drastically for the fKdV and the fBBM equations. To illustrate in the case $p=1$, it has been shown that  the solitary wave solutions of the fKdV equation are orbitally stable for $c>0$ when $\alpha \in (\frac{1}{2}, 2)$ and spectrally unstable when $\alpha \in (\frac{1}{3}, \frac{1}{2})$.  The solitary wave solutions of the fBBM equation are orbitally stable when $c>1$ and $\alpha \in [\frac{1}{2}, 2)$ or when $c>c_1>1$ and $\alpha \in (\frac{1}{3}, \frac{1}{2})$ ($c_1$ is also a critical value depending on $\alpha$). The solutions are spectrally unstable when $\alpha \in (\frac{1}{3}, \frac{1}{2})$ and $1<c<c_1$.
Therefore the stability problem for the solitary wave solutions of the gfBBM equation is crucial since the equation involves the  fractional terms of both the  fKdV and the fBBM equations.

An important feature of the gfBBM equation is  that it admits negative solitary wave solutions. In this study, we prove  the existence of negative solitary wave solutions for small wave speed and odd values of the order of nonlinearity $p$. To the best of our knowledge, the existence of negative solitary wave  has never been investigated for the fKdV and fBBM equations when $\alpha \in (0,2)$. The negative solitary waves first appeared in \cite{lewis} for a regularized long wave model in water waves. For the BBM equation, $\alpha=2$ in the equation $(\ref{fbbm})$, the authors in \cite{kalisch} established the existence of negative solitary waves when $c<0$ and $p$ odd. Using arguments of symmetry and the existence of positive solutions $Q_c$ with $c>1$ (see \cite{strauss}), it is possible to prove that the case $p$ even also admits negative solitary waves of the form $-Q_c$ for $c>1$. The orbital stability of negative solitary waves has been also treated in \cite{kalisch} (see \cite{kalisch1} for a complementary result). For $p\geq1$ odd and $c<0$, there exists a critical value $c_2<0$ such that the solitary wave is orbitally unstable for $c_2<c<0$ and orbitally stable for $c<c_2$. For $p>2$ even, we have a similar scenario as in the case of positive and solitary waves since the negative solution in this case is given by $-Q_c$, where $Q_c$ is the positive solution for $c>1$.

\indent One of the main properties which guarantee the existence of solitary waves of the form $u(x,t)=Q_c(x-ct)$ is the presence of translation symmetry at the spatial variable associated to the equation $(\ref{gfBBM})$. In fact, if $u(x,t)$ is a solution for $(\ref{gfBBM})$, so that $u(x+y,t)$ is also a solution of the same equation for all $y\in\mathbb{R}$. An important question arises with the presence of the translation symmetry: if the initial value $u_0$ of the associated Cauchy problem for the equation $(\ref{gfBBM})$ is close to the solitary wave $Q_c$, can we conclude that the corresponding evolution $u(x,t)$ is close to the orbit generated by translations associated to $Q_c$? Roughly speaking, we have defined the notion of orbital stability for the solitary wave $Q_c$. More precisely, the formal definition for this concept is given as follows:
\begin{definition}\label{orbital}
		Let $Q_c$ be a traveling wave solution for \eqref{gfBBM}. We say that $Q_c$ is orbitally stable in $H^{\frac{\alpha}{2}}(\mathbb{R})$ provided that, given $\varepsilon>0$, there exists a $\delta>0$ with the following property: if $u_0\in H^{\frac{\alpha}{2}}(\mathbb{R})$ satisfies $\|u_0-Q_c\|_{H^{\frac{\alpha}{2}}}<\delta$, then the local solution, $u(t)$, defined in some interval $[0,T')$ of \eqref{gfBBM} with initial condition $u_0$ can be continued for a solution in $0\leq t<+\infty$ and satisfies
		$$
		\sup_{t\in (0,+\infty)}\inf_{x_0\in\mathbb{R}}||u(\cdot,t)-Q_c(\cdot-x_0)||_{H^{\frac{\alpha}{2}}}<\varepsilon.
		$$
		Otherwise, we say that $Q_c$ is orbitally unstable in $H^{\frac{\alpha}{2}}(\mathbb{R})$.

\end{definition}

\indent In the current study, we discuss the orbital stability of solitary wave solutions of the gfBBM equation in the case of quadratic and cubic non-linearities, that is, $p=1$ and $p=2$, respectively. To illustrate the reason only in the case $p=1$, we can mention that our notion of orbital stability contemplates only local solutions and we can show such property using the approach in Grillakis, Shatah and Strauss \cite{grillakis} for all $c>1$ when $\frac{1}{2}\leq\alpha \leq2$ and for $c>c_1>1$ with  $c_1=\frac{9\alpha+2+\sqrt{2}\sqrt{3\alpha-1}}{15\alpha}$ when $\frac{1}{3}<\alpha < \frac{1}{2}$. We also have orbital stability result for $c_2<c<3/5$ with
 $c_2=\frac{9\alpha+2-\sqrt{2}\sqrt{3\alpha-1}}{15\alpha}$
when $1\leq\alpha \leq2$.

In order to fill  out the lack of stability results we present a spectral (in)stability for the solitary wave $Q_c$. Equation \eqref{gfBBM} can be rewritten in the Hamiltonian form as
\begin{equation}\label{H-form}
	u_t=J H'(u),
\end{equation}
where $J=(I + \frac{5}{4} D^{\alpha})^{-1}\partial_x$ is a skew-symmetric operator and $H$ denotes the Hamiltonian of the corresponding equation given by \eqref{hamilton}.
It is well known that the abstract theory of orbital instability  in \cite{grillakis} can not be applied since $J$ present in the equation $(\ref{H-form})$ is not onto.  We also show that the solitary wave solution is spectrally stable in the same region of orbital stability.

The paper is organized as follows:  In Section 2 we give the global well-posedness result for the Cauchy problem associated with the gfBBM equation.  Section 3 is devoted to the existence of negative solitary waves. We investigate the orbital and spectral stabilities of solitary wave solutions in Section 4. In Section 5, we present some numerical experiments to investigate the stability of the solutions and to understand the behaviour of negative travelling wave solutions.

Throughout  this study, $C$ denotes the generic constant.
\setcounter{equation}{0}

\section{ Global Solutions of the Cauchy Problem for the gfBBM Equation}
The local existence of solutions for the Cauchy problem
\begin{eqnarray}
   && u_t+  u_x + \frac{1}{2}(u^{p+1})_x+   \frac{3}{4}D^{\alpha} u_{x}+ \frac{5}{4}D^{\alpha} u_{t}=0, \label{gfBBM-c}  \\
   && u(x,0)=u_0 (x) \label{incon}
\end{eqnarray}
in the case  \mbox{$0<\alpha<1$} is proved by introducing  the following regularization
 \begin{eqnarray}
 && u_t^{\epsilon} + u_x^{\epsilon} + \frac{1}{2} [(u^{\epsilon})^{p+1}]_x + \frac{3}{4} D ^{\alpha} u_x^{\epsilon}+ \frac{5}{4} D ^{\alpha}u_t^{\epsilon} - \epsilon u^{\epsilon}_{xx} =0,  \label{reg1}  \\
 && u^\epsilon (x,0)=u_0^\epsilon(x).  \label{regin}
\end{eqnarray}

\noindent
The  well-posedness result on the Sobolev space  $H^r(\mathbb{R})$ is given as follows:

\begin{theorem}{(Theorem 2.7 of \cite{oruc})}
Let $0<\alpha <1, r \geq  2 - \frac{\alpha}{2} $ and $u_0 \in H^r(\mathbb{R})$. Then there exists a time $T'>0$, the solution $u^\epsilon$ of  the Cauchy problem for the eqs. \eqref{reg1}-\eqref{regin} converges uniformly to
the unique solution $u$ of the Cauchy problem for the eqs. \eqref{gfBBM-c}-\eqref{incon} in $C([0,T'),H^{r}(\mathbb{R}))$  as $\epsilon\rightarrow 0$.
\end{theorem}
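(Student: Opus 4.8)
The statement is a vanishing-viscosity (parabolic regularization) result, so the plan is threefold: solve the regularized problem for each fixed $\epsilon>0$, obtain a priori bounds that are uniform in $\epsilon$ on a common time interval, and then pass to the limit $\epsilon\to0$, identifying the limit as the solution of \eqref{gfBBM-c}--\eqref{incon}. First I would recast both Cauchy problems as abstract evolution equations. Writing $\phi=I+\frac54 D^\alpha$, whose Fourier symbol $1+\frac54|\xi|^\alpha\ge1$ makes $\phi^{-1}$ a bounded operator that smooths by order $\alpha$, equation \eqref{reg1} takes the form
\[
u^\epsilon_t=\phi^{-1}\Big(\epsilon u^\epsilon_{xx}-u^\epsilon_x-\tfrac34 D^\alpha u^\epsilon_x\Big)-\tfrac12\phi^{-1}\partial_x\big((u^\epsilon)^{p+1}\big).
\]
The linear operator $L_\epsilon=\phi^{-1}\big(\epsilon\partial_x^2-\partial_x-\frac34 D^\alpha\partial_x\big)$ has symbol with real part $-\epsilon\xi^2/(1+\frac54|\xi|^\alpha)\le0$, so for each fixed $\epsilon>0$ it generates an analytic, smoothing semigroup $S_\epsilon(t)$.

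For fixed $\epsilon>0$, local well-posedness then follows from the Duhamel representation $u^\epsilon(t)=S_\epsilon(t)u_0-\frac12\int_0^t S_\epsilon(t-\tau)\phi^{-1}\partial_x\big((u^\epsilon)^{p+1}\big)\,d\tau$. Although $\phi^{-1}\partial_x$ maps $H^r$ only into $H^{r-1+\alpha}$ (a genuine loss when $\alpha<1$), the parabolic smoothing of $S_\epsilon$ recovers the missing derivatives with an integrable-in-time singularity, and since $r\ge2-\frac\alpha2>\frac12$ makes $H^r(\mathbb{R})$ a Banach algebra, the nonlinearity is locally Lipschitz on bounded sets. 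A contraction-mapping argument thus yields, for each $\epsilon>0$, a unique solution $u^\epsilon\in C([0,T_\epsilon);H^r(\mathbb{R}))$; the existence time $T_\epsilon$ may a priori shrink as $\epsilon\to0$, which is exactly why the next step is needed.

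The heart of the matter --- and the step I expect to be the main obstacle --- is the uniform-in-$\epsilon$ a priori estimate. Setting $\Lambda^\sigma=(I-\partial_x^2)^{\sigma/2}$ and choosing $\sigma=r-\frac\alpha2$, I would apply $\Lambda^\sigma$ to \eqref{reg1} and pair with $\Lambda^\sigma u^\epsilon$ in $L^2$, without inverting $\phi$. The time-derivative terms assemble into $\frac12\frac{d}{dt}\mathcal E$ with $\mathcal E=\|u^\epsilon\|_{H^\sigma}^2+\frac54\|D^{\alpha/2}u^\epsilon\|_{H^\sigma}^2$, an energy equivalent to $\|u^\epsilon\|_{H^r}^2$; this regularity gain built into the BBM structure is precisely what makes $\mathcal E(0)$ finite for $u_0\in H^r$. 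The transport term $u^\epsilon_x$ and the dispersive term $\frac34 D^\alpha u^\epsilon_x$ are skew-adjoint and drop out, while the viscous term contributes $+\epsilon\|\partial_x u^\epsilon\|_{H^\sigma}^2\ge0$ and may be discarded in the upper bound --- this is what decouples the estimate from $\epsilon$. The nonlinear contribution $\int\Lambda^\sigma\big((u^\epsilon)^{p+1}\big)_x\,\Lambda^\sigma u^\epsilon\,dx$ is controlled by a Kato--Ponce commutator estimate and the algebra property, the operative fact being that $r\ge2-\frac\alpha2>\frac32$ controls $\|\partial_x u^\epsilon\|_{L^\infty}$ by $\|u^\epsilon\|_{H^r}$. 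One obtains $\frac{d}{dt}\mathcal E\le C(\mathcal E)\,\mathcal E$ with $C$ independent of $\epsilon$; together with the continuation argument this gives a common existence time $T'>0$ and a uniform bound $\sup_{\epsilon>0}\sup_{[0,T')}\|u^\epsilon\|_{H^r}\le M$.

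Finally I would prove convergence. Forming the difference $w=u^\epsilon-u^{\epsilon'}$ and repeating the energy computation, the obstruction is the viscous mismatch $\epsilon u^\epsilon_{xx}-\epsilon'u^{\epsilon'}_{xx}$, which loses derivatives and cannot be absorbed at the top level. The remedy is the Bona--Smith device: estimate $w$ in a lower-order norm, where the mismatch is bounded by $C(\epsilon+\epsilon')$ times the uniform high-norm bound $M$, so that $(u^\epsilon)$ is Cauchy in $C([0,T');H^{\sigma'})$ for some $\sigma'<r$ and converges to some $u$. Interpolating this strong lower-norm convergence against the uniform $H^r$ bound, and invoking the Bona--Smith continuity argument based on smoothing the initial data, upgrades the convergence to $C([0,T');H^r(\mathbb{R}))$ and shows $u\in C([0,T');H^r)$. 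Passing to the limit in the Duhamel formula identifies $u$ as the unique solution of \eqref{gfBBM-c}--\eqref{incon}, completing the proof.
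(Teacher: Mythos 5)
The paper itself contains no proof of this theorem---it is quoted verbatim from \cite{oruc}, where local well-posedness for $0<\alpha<1$ is established exactly by the parabolic regularization \eqref{reg1}--\eqref{regin} (in the spirit of \cite{abdelouhab}) with $\epsilon$-uniform energy estimates and a Bona--Smith limiting argument. Your proposal follows essentially that same route---solvability of the regularized problem for fixed $\epsilon>0$ via the smoothing semigroup, an $\epsilon$-independent bound in the BBM-type energy norm at level $\sigma=r-\frac{\alpha}{2}$ (using that $r\ge 2-\frac{\alpha}{2}>\frac{3}{2}$ controls $\|\partial_x u^\epsilon\|_{L^\infty}$), and the Bona--Smith device to upgrade the lower-norm Cauchy property to convergence in $C([0,T');H^{r}(\mathbb{R}))$---so it is correct and consistent with the cited proof.
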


\indent Now, we consider the case   $\alpha > 1$. The Cauchy problem \eqref{gfBBM-c}-\eqref{incon} is rewritten as
\begin{eqnarray}
   && u_t +  ({I+ \displaystyle\frac{3}{4}D^{\alpha}})({I+ \displaystyle\frac{5}{4}D^{\alpha}})^{-1}  u_x  =  - \frac{1}{2}(I+ \displaystyle\frac{5}{4}D^{\alpha})^{-1} (u^{p+1})_x, \label{gfbbm3} \\
   &&  u(x,0)=u_0 (x).   \label{gfbbm4}
\end{eqnarray}
Here, we note that the operator $\displaystyle I+ \frac{5}{4}D^{\alpha}$ is invertible, as its Fourier transform is never zero. The Duhamel formula implies that $u$  is the solution of the Cauchy problem \eqref{gfbbm3}-\eqref{gfbbm4} if, and only if, $u$ satisfies the integral equation $u=\Phi u $  where

\begin{equation}\label{reg2}
  (\Phi u)(x,t) = S(t) u_0(x) - \frac{1}{2} \int_{0}^{t} S({t-\tau}) \big[ {\partial_x(I+ \displaystyle\frac{5}{4}D^{\alpha})^{-1}}u^{p+1} \big] (x,\tau) d\tau,
\end{equation}
with
\begin{equation*}
  S(t) u= \mathcal{F}^{-1}\bigg( e^{-\frac{1+ \frac{3}{4}|\xi|^{\alpha}}{1+ \frac{5}{4}|\xi|^{\alpha}} i \xi t} \bigg)* u(x).
\end{equation*}
The symbol $*$ denotes the convolution operation.
\begin{lemma}{(\cite{adams})} \label{lemproduct}
For $r > 1/2$,  $H^r (\mathbb{R})$ is an algebra with respect to the
product of functions. That is, if $u, v \in H^r(\mathbb{R})$ then   $uv \in H^r(\mathbb{R})$  and
\begin{equation}
\|uv \|_ {H^r (\mathbb{R})}  \leq C \|u\|_ {H^r (\mathbb{R})} \| v \|_ {H^r (\mathbb{R})}.
\end{equation}
\end{lemma}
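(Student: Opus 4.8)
The plan is to prove the algebra property directly on the Fourier side, where the pointwise product becomes a convolution and the $H^r$ norm becomes a weighted $L^2$ norm. Writing $\langle \xi \rangle = (1+\xi^2)^{1/2}$, recall that $\|w\|_{H^r}^2 = \int_{\mathbb{R}} \langle \xi \rangle^{2r} |\hat w(\xi)|^2 \, d\xi$ and that $\widehat{uv} = c\, \hat u * \hat v$ for a universal constant $c$. The goal is therefore to estimate $\|\langle \cdot \rangle^r (\hat u * \hat v)\|_{L^2}$ by $\|u\|_{H^r}\|v\|_{H^r}$.

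First I would record the elementary weight inequality
$$
\langle \xi \rangle^r \leq C_r\big(\langle \xi - \eta \rangle^r + \langle \eta \rangle^r\big), \qquad r \geq 0,
$$
which follows from $|\xi| \leq |\xi - \eta| + |\eta|$ together with the subadditivity bound $(a+b)^r \leq 2^r(a^r + b^r)$. Inserting this inside the convolution integral distributes the weight onto either the first or the second factor, giving the pointwise bound
$$
\langle \xi \rangle^r |\widehat{uv}(\xi)| \leq C\Big[ \big(F_u * |\hat v|\big)(\xi) + \big(|\hat u| * F_v\big)(\xi)\Big],
$$
where $F_w(\xi) = \langle \xi \rangle^r |\hat w(\xi)|$. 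By symmetry it suffices to treat the first term.

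Next I would take the $L^2$ norm in $\xi$ and apply Young's inequality for convolutions in the form $\|f * g\|_{L^2} \leq \|f\|_{L^2}\|g\|_{L^1}$, which yields
$$
\big\|F_u * |\hat v|\big\|_{L^2} \leq \|F_u\|_{L^2}\,\|\hat v\|_{L^1} = \|u\|_{H^r}\,\|\hat v\|_{L^1}.
$$
The decisive step, and the only place where the hypothesis $r > 1/2$ is genuinely used, is the control of $\|\hat v\|_{L^1}$ by $\|v\|_{H^r}$. I would obtain this from Cauchy--Schwarz:
$$
\|\hat v\|_{L^1} = \int_{\mathbb{R}} \langle \eta \rangle^{-r}\,\langle \eta \rangle^{r} |\hat v(\eta)|\, d\eta \leq \Big(\int_{\mathbb{R}} \langle \eta \rangle^{-2r}\, d\eta\Big)^{1/2} \|v\|_{H^r},
$$
and the integral $\int_{\mathbb{R}} (1+\eta^2)^{-r}\, d\eta$ is finite precisely when $2r > 1$. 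This is exactly the threshold $r > 1/2$ in one space dimension, and it is the main obstacle in the sense that the whole estimate hinges on the weight $\langle \cdot \rangle^{-r}$ being square integrable; below this threshold $\hat v$ need not be integrable and the argument breaks down. Combining the two symmetric contributions then gives $\|uv\|_{H^r} \leq C\|u\|_{H^r}\|v\|_{H^r}$ with $C = C(r)$, which simultaneously shows $uv \in H^r(\mathbb{R})$ and establishes the claimed bound, completing the proof.
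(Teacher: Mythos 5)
Your proof is correct, and in fact the paper gives no proof at all: Lemma \ref{lemproduct} is simply quoted from the Sobolev space literature (\cite{adams}), so there is no argument in the paper to compare against. Your Fourier-side argument --- splitting the weight via $\langle \xi \rangle^r \leq C_r\left(\langle \xi-\eta \rangle^r + \langle \eta \rangle^r\right)$, applying Young's inequality $\|f*g\|_{L^2} \leq \|f\|_{L^2}\|g\|_{L^1}$ to each piece, and closing the estimate with the Cauchy--Schwarz bound $\|\hat v\|_{L^1} \leq \left(\int_{\mathbb{R}} \langle \eta \rangle^{-2r}\,d\eta\right)^{1/2}\|v\|_{H^r}$ --- is the standard self-contained proof of the algebra property, and you correctly identify $2r>1$ as the exact point where the hypothesis $r>1/2$ enters (it is equivalent to the embedding $H^r(\mathbb{R}) \hookrightarrow \mathcal{F}L^1 \subset L^\infty$, which is what the whole estimate rests on). The only cosmetic remarks: the constant in $\widehat{uv} = c\,\hat u * \hat v$ depends on the Fourier normalization, and your weight inequality is justified exactly as you say, via the triangle inequality $\langle \xi \rangle \leq \langle \xi - \eta \rangle + \langle \eta \rangle$ (seen in $\mathbb{R}^2$ applied to $(\xi-\eta,1)+(\eta,0)$) together with $(a+b)^r \leq 2^r(a^r+b^r)$ for $r \geq 0$. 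So your write-up not only verifies the cited lemma but supplies the proof the paper delegates to the reference.
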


%%\begin{lemma}\cite{runst} \label{lemma2}
%%Assume that $f\in C^k(\mathbb{R})$, $u,v\in H^r(\Omega)\cap L^\infty (\Omega)$ and $k=[r]+1$, where $r\geq 0$. Then we have
%%\begin{equation}
%%\|f(u)-f(v)\|_r \leq K(M) \|u-v\|_r
%%\end{equation}
%%if $\|u\|_\infty \leq M, \|v\|_\infty \leq M, \|u\|_r \leq M$  and $\|v\|_r \leq M$, where $K(M)$ is a constant depending
%%on $M$ and $s$.
%%\end{lemma}

Now, we  prove the existence and uniqueness of the local solution for the problem \eqref{gfBBM-c}-\eqref{incon} with $\alpha > 0$ by using the contraction mapping principle.

\begin{theorem}\label{lwp1}
Let $\alpha > 1$ be fixed. Consider an initial data  $u_0 \in H^r (\mathbb{R})$ with $r\geq\alpha/2$. There exists a time $T'=T'(||u_0||_{H^r})>0$ and a unique local solution $u \in C([0,T'),H^r(\mathbb{R}))$ to the Cauchy problem
\eqref{gfBBM-c}-\eqref{incon}.
\end{theorem}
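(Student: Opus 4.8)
The plan is to apply the contraction mapping principle to the integral operator $\Phi$ defined in \eqref{reg2} on a suitable closed ball in the Banach space $X_{T'}=C([0,T'),H^r(\mathbb{R}))$ equipped with the norm $\|u\|_{X_{T'}}=\sup_{t\in[0,T')}\|u(\cdot,t)\|_{H^r}$. First I would fix $M=2\|u_0\|_{H^r}$ and consider the ball $B_M=\{u\in X_{T'}:\|u\|_{X_{T'}}\leq M\}$, with $T'$ to be chosen small at the end. The goal is to show that, for $T'$ sufficiently small, $\Phi$ maps $B_M$ into itself and is a strict contraction there.

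Two structural facts drive every estimate. The first is that $S(t)$ is a unitary group on $H^r(\mathbb{R})$: its Fourier symbol $e^{-\frac{1+\frac{3}{4}|\xi|^{\alpha}}{1+\frac{5}{4}|\xi|^{\alpha}}i\xi t}$ has modulus one for each real $\xi$, so $\|S(t)w\|_{H^r}=\|w\|_{H^r}$ for all $w\in H^r$ and all $t$. The second, and the place where the hypothesis $\alpha>1$ is essential, is that the operator $\partial_x(I+\frac{5}{4}D^{\alpha})^{-1}$ is bounded on $H^r(\mathbb{R})$. Indeed its symbol $\frac{i\xi}{1+\frac{5}{4}|\xi|^{\alpha}}$ behaves like $|\xi|^{1-\alpha}$ as $|\xi|\to\infty$, so for $\alpha>1$ it is a bounded (in fact smoothing) Fourier multiplier, and there is a constant $C_0$ with $\|\partial_x(I+\frac{5}{4}D^{\alpha})^{-1}w\|_{H^r}\leq C_0\|w\|_{H^r}$. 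Finally, since $\alpha>1$ forces $r\geq\alpha/2>1/2$, Lemma \ref{lemproduct} applies and $H^r(\mathbb{R})$ is a Banach algebra.

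With these in hand the self-map and contraction estimates are routine. For $u\in B_M$, unitarity gives $\|S(t)u_0\|_{H^r}=\|u_0\|_{H^r}$, and bounding the Duhamel integral by Minkowski's inequality together with the multiplier bound and the algebra estimate $\|u^{p+1}\|_{H^r}\leq C\|u\|_{H^r}^{p+1}$ yields $\|\Phi u\|_{X_{T'}}\leq\|u_0\|_{H^r}+\frac{1}{2}CC_0T'M^{p+1}$. Choosing $T'$ so that $\frac{1}{2}CC_0T'M^{p+1}\leq\|u_0\|_{H^r}$ gives $\Phi(B_M)\subseteq B_M$. For the contraction, I would factor $u^{p+1}-v^{p+1}=(u-v)\sum_{k=0}^{p}u^kv^{p-k}$ and use the algebra property to obtain $\|u^{p+1}-v^{p+1}\|_{H^r}\leq CM^p\|u-v\|_{H^r}$, so that $\|\Phi u-\Phi v\|_{X_{T'}}\leq\frac{1}{2}CC_0T'M^p\|u-v\|_{X_{T'}}$. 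Shrinking $T'$ further so that $\frac{1}{2}CC_0T'M^p<1$ makes $\Phi$ a contraction; the Banach fixed point theorem then produces the unique $u\in B_M$ with $u=\Phi u$, which is the desired local solution, and continuity in $t$ is inherited from the continuity of $S(\cdot)$ and of the integral term.

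I do not expect any single step to be a serious obstacle; the argument is the standard Kato-type iteration. The one point that must be handled with care—and that dictates the restriction $\alpha>1$ in the statement—is the boundedness of $\partial_x(I+\frac{5}{4}D^{\alpha})^{-1}$ on $H^r$. For $\alpha\leq1$ this operator no longer gains (or even preserves) derivatives, the Duhamel term would cost a derivative that the fixed-point space cannot absorb, and one must instead pass through the parabolic regularization \eqref{reg1}-\eqref{regin} as in the case $0<\alpha<1$. Thus the bulk of the care lies in verifying the multiplier estimate and in bookkeeping the constants so that a single choice of $T'=T'(\|u_0\|_{H^r})$ secures both the self-map and the contraction properties simultaneously.
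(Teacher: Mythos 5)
Your proposal is correct and follows essentially the same route as the paper: a Banach fixed-point argument for $\Phi$ on a ball of radius $2\|u_0\|_{H^r}$ in $C([0,T'],H^r(\mathbb{R}))$, using the modulus-one symbol of $S(t)$, the boundedness of the multiplier $\frac{i\xi}{1+\frac{5}{4}|\xi|^{\alpha}}$ for $\alpha>1$ (the paper's inequality $1+\frac{5}{4}|\xi|^{\alpha}\geq|\xi|$), and the algebra property of $H^r$ with $r\geq\alpha/2>1/2$ from Lemma \ref{lemproduct}. Your explicit factorization of $u^{p+1}-v^{p+1}$ in the contraction step is merely spelled out where the paper leaves it implicit, so the two arguments coincide in substance.
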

\begin{proof}
For $T>0$ and $a>0$, define $\overline{B}_{T,a}$ as the closed ball
\begin{equation*}
  \overline{B}_{T,a}=\{u \in C([0,T], H^r(\mathbb{R})), \|u\|_{L^{\infty}([0,T],H^r(\mathbb{R}))} \leq 2a\}.
\end{equation*}
We first prove  $\Phi$ maps $\overline{B}_{T,a}$ into $\overline{B}_{T,a}$ for $T>0$ small enough and a convenient choice of $a>0$. From \eqref{reg2}, one gets
\begin{equation}\label{reg3}
  \|\Phi u(t)\|_{H^r}
  %& = & \|S(t)u_0 - \frac{1}{2} \int_{0}^{t} S({t-\tau}) \big[ {\partial_x}{(I+\frac{5}{4}D^{\alpha})^{-1}} %(u)^{p+1} \big](\tau) d\tau \|_{H^r} \nonumber \\
  \leq   \|S(t)u_0\|_{H^r} +\frac{1}{2}  \int_{0}^{t} \| S({t-\tau}) \big[ {\partial_x}{(I+\frac{5}{4}D^{\alpha})^{-1}} u^{p+1}(\tau) \big] \|_{H^r}  d\tau.
\end{equation}
The definition of Sobolev norm allows us to estimate the following term
\begin{equation}\label{Gtu0}\begin{array}{lllll}
 \|S(t)u_0\|_{H^r} &=&
 \|(1+|\xi|^2)^{r/2}  e^{-\frac{1+ \frac{3}{4}|\xi|^{\alpha}}{1+ \frac{5}{4}|\xi|^{\alpha}} i \xi t} \hat{u}_0(\xi)\| \\
&\leq&   \displaystyle  \|u_0\|_{H^r}.
\end{array}
\end{equation}
% \|S(t)u_0\|_{H^r}^2
%&=& \displaystyle\int_{-\infty}^{+\infty} (1+|\xi|^2)^r | e^{-\frac{1+ \frac{3}{4}|\xi|^{\alpha}}{1+ \frac{5}{4}|\xi|^{\alpha}} i \xi t} \hat{u_0}(\xi)|^2d\xi\\\\
%%&\leq&\displaystyle \int_{-\infty}^{+\infty} (1+|\xi|^2)^r |\hat{u_0}(\xi)|^2 d\xi \\\\
%&\leq& \displaystyle  \|u_0\|_{H^r}^2
%\end{array}\end{equation}
\noindent
Using Lemma \ref{lemproduct} and $1+\displaystyle \frac{5}{4} |\xi|^{\alpha}  \geq |\xi|$  for $\alpha \geq 1$, one gets similarly to $(\ref{Gtu0})$ that
\begin{eqnarray}  \label{reg4}
  \| S({t-\tau}) \big[{\partial_x}{(I+\frac{5}{4}D^{\alpha})^{-1}} u^{p+1}(t)\big] \|_{H^r}
&\leq& \displaystyle \| (1+|\xi|^2)^{r/2} \frac{i\xi}{1+\frac{5}{4} |\xi|^{\alpha}} \widehat{u^{p+1}}(\xi,t) \|   \nonumber \\
%&\leq& \displaystyle\int_{-\infty}^{+\infty} (1+|\xi|^2)^r | \widehat{u^{p+1}}(\xi,t)|^2 d\xi  \\\\
&=&\displaystyle  \| u^{p+1}(t)\|_{H^r} \nonumber \\
&\leq&\displaystyle   C \| u(t)\|_{H^r}^{{p+1}}.
\end{eqnarray}
Using \eqref{Gtu0} and \eqref{reg4} in \eqref{reg3}, we obtain
\begin{equation*}
  \|\Phi u (t)\|_{H^r} \leq \|u_0\|_{H^r} + \frac{C T}{2} \underset{t \in [0,T] }{\mbox{sup}} \|u(t)\|_{H^r}^{p+1}=\|u_0\|_{H^r}+C T2^pa^{p+1}.
\end{equation*}
By choosing $a=||u_0||_{H^r}$ and $T=T'>0$ small enough and satisfying  $T \leq \displaystyle\frac{1}{C2^{p}\|u_0\|_{H^r}^p}$, we have that $\Phi$ maps $\overline{B}_{T,a}$ into $\overline{B}_{T,a}$.\\
\indent We show that  $\Phi$ is a strict contraction. Let $u_1$, $u_2$ $\in$ $\overline{B}_{T,a}$. The Duhamel formula  \eqref{reg2} gives
\begin{equation}
% \nonumber % Remove numbering (before each equation)
  \|\Phi u_1(t)- \Phi u_2 (t)\|_{H^r}    \leq  \frac{1}{2} \int_{0}^{t} \| S({t-\tau}) {\partial_x}{(I+\frac{5}{4}D^{\alpha})^{-1}} \big[u_1^{p+1}-u_2^{p+1} \big](\tau) \|_{H^r} d\tau .
\end{equation}
An analogous argument as in \eqref{reg4} implies
\begin{eqnarray*}
\|\Phi u_1(t)- \Phi u_2(t)\|_{H^r}  &\leq &  \frac{C}{2} \int_{0}^{t}    \| u_1^{p+1}(\tau)- {u_2}^{p+1}(\tau) \|_{H^r} d\tau   \nonumber\\\\
  &\leq & C T 2^{p-1}a^p \underset{t \in [0,T] }{\mbox{sup}} \|u_1(t)-u_2(t)\|_{H^r}.  \nonumber
\end{eqnarray*}
Choosing a smaller  $T>0$ as above such that $\displaystyle T \leq \frac{1}{C2^p||u_0||_{H^r}^p}<\frac{2}{C2^p||u_0||_{H^r}^p}$, we have that  $\Phi$ is strictly contractive. The remainder of the proof relies on application of the contraction mapping principle but we omit the details.
\end{proof}

%\begin{proposition}\label{conserved1}
%	Let $\alpha>1$ be fixed. The quantities $H$ and $F$ defined in $(\ref{hamilton})$ and $(\ref{cq2})$, respectively  are conservation laws.
%\end{proposition}

\indent Theorem $\ref{lwp1}$ and the conservation of the quantities $H$ and $F$ give us the following result.

\begin{theorem}\label{teogwp}
	Let $\alpha>1$ be fixed. For each $u_0\in H^{\frac{\alpha}{2}}(\mathbb{R})$, the maximal local time of existence $T'$ in Theorem $\ref{lwp1}$ can be considered as $T'=+\infty$, that is, $u\in C([0,+\infty);H^{\frac{\alpha}{2}}(\mathbb{R}))$.	
\end{theorem}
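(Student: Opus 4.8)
The plan is to upgrade the local solution of Theorem~\ref{lwp1} to a global one by means of an a priori bound furnished by the conserved quantities, exploiting the fact that the local existence time there depends only on the size of the datum in $H^{\alpha/2}$. The crucial structural observation is that the functional $F$ in \eqref{cq2} is coercive in $H^{\alpha/2}(\mathbb{R})$. Indeed, writing $F(u)=\frac12\|u\|_{L^2}^2+\frac58\|D^{\alpha/2}u\|_{L^2}^2$ and using the elementary equivalence $(1+|\xi|^2)^{\alpha/2}\asymp 1+|\xi|^\alpha$, one obtains
\[
c\,\|u\|_{H^{\alpha/2}}^2 \le F(u) \le C\,\|u\|_{H^{\alpha/2}}^2
\]
for constants $0<c\le C$ depending only on $\alpha$. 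Thus controlling $F$ along the flow is equivalent to controlling the $H^{\alpha/2}$-norm, and $F$ (rather than the sign-indefinite Hamiltonian $H$) is the quantity that yields the bound.

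The second step is to verify that $F$ is conserved. Rewriting \eqref{gfBBM} as $(I+\frac54 D^\alpha)u_t=-\partial_x\big[(I+\frac34 D^\alpha)u+\frac12 u^{p+1}\big]$ and noting that the $L^2$-gradient of $F$ is $(I+\frac54 D^\alpha)u$, a formal computation gives
\[
\frac{d}{dt}F(u(t))=\int_{\mathbb{R}} u_x\Big[(I+\tfrac34 D^\alpha)u+\tfrac12 u^{p+1}\Big]\,dx .
\]
Here $\int u_x u\,dx=0$ and $\int u_x u^{p+1}\,dx=0$ are exact derivatives, while $\int u_x D^\alpha u\,dx=0$ follows from Parseval's identity since the integrand $i\xi|\xi|^\alpha|\hat u(\xi)|^2$ is odd. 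Hence $\frac{d}{dt}F(u(t))=0$. The one delicate point, and the main obstacle of the proof, is that at the borderline regularity $r=\alpha/2$ the integrations by parts above are not literally justified, as the pairing $\langle u,\partial_x D^\alpha u\rangle$ requires additional smoothness. I would remedy this by a standard approximation argument: regularize the datum as $u_0^n\to u_0$ in $H^{\alpha/2}$ with $u_0^n\in H^{\infty}(\mathbb{R})$, whose corresponding solutions are smooth enough for the computation to be rigorous and for $F(u^n(t))=F(u_0^n)$ to hold; then pass to the limit using the continuous dependence contained in the contraction estimates of Theorem~\ref{lwp1}.

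With conservation of $F$ established, the a priori bound is immediate: for every $t$ in the interval of existence,
\[
\|u(t)\|_{H^{\alpha/2}}^2\le c^{-1}F(u(t))=c^{-1}F(u_0)=:M^2,
\]
a bound independent of $t$. I would then close the argument by the usual continuation principle. Since the local existence time $T'=T'(\|u_0\|_{H^{\alpha/2}})$ in Theorem~\ref{lwp1} depends only on the norm of the datum, and this norm never exceeds $M$, the solution can be restarted repeatedly with a fixed time-step $\tau=\tau(M)>0$; after $n$ steps it is defined on $[0,n\tau)$, so letting $n\to\infty$ shows that the maximal time is $T'=+\infty$. Equivalently, the blow-up alternative for the local flow forces $T'=+\infty$, since $\|u(t)\|_{H^{\alpha/2}}$ cannot become unbounded in finite time, and we conclude that $u\in C([0,+\infty);H^{\alpha/2}(\mathbb{R}))$.
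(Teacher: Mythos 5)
Your proposal is correct and follows essentially the same route as the paper, whose (omitted) proof is exactly the combination of Theorem~\ref{lwp1} with the conservation of $F$: since $F(u)\asymp\|u\|_{H^{\alpha/2}}^2$, conservation gives a uniform-in-time $H^{\alpha/2}$ bound, and the local existence time depending only on $\|u_0\|_{H^{\alpha/2}}$ allows iteration of the local theory. Your additional care about justifying $\frac{d}{dt}F(u(t))=0$ at the borderline regularity $r=\alpha/2$ via smooth approximation is a detail the paper leaves implicit, not a departure from its argument.
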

\begin{flushright}
	$\square$
\end{flushright}

%%%%%%%%%%%%%%%%%%%%%%%%%%%%%%%%%%%%%
%The conserved quantitites are given by
%\begin{eqnarray}\label{cq1}
% && M(u)= \int_{\mathbb{{R}}} u(x,t)dx,   \\
% && F(u)=\frac{1}{2} \int_{-\infty}^{+\infty} \left(u^2 + \frac{5}{4}|D^{\frac{\alpha}{2}}u |^2\right) dx.
%\end{eqnarray}
\setcounter{equation}{0}
\section{Existence of Negative Solitary Wave Solutions}

\indent A solitary wave solution associated to $(\ref{gfBBM})$ is a smooth solution of the form $u(x,t)=Q_c(\xi)$, $\xi=x-ct$ with wave speed $c>0$ and satisfying the decay property $\displaystyle{\lim_{|\xi| \rightarrow \infty} Q_c^{(n)}(\xi)=0}$, $n\in\mathbb{N}$. Using this ansatz on \eqref{gfBBM}
we obtain,
\begin{equation} \label{sweq1}
	-cQ_c'+Q_c'+\frac{1}{2}(Q_c^{p+1})'- \frac{5}{4} c D^{\alpha} Q_c' + \frac{3}{4} D^{\alpha} Q_c'=0.
\end{equation}
% $^\prime$ is the derivative with respect to $\xi$.
Integrating on $\mathbb{R}$, the above equation reduces to the following ODE
\begin{equation}\label{sw1}
	(\frac{5}{4} c-\frac{3}{4})D^{\alpha}Q_c   +(c-1) Q_c -\frac{1}{2} Q_c^{p+1} =0.
\end{equation}

In \cite{oruc}, it is stated that the equation \eqref{gfBBM} has nontrivial solutions when $c<3/5$ or $c>1$ and $\alpha > p/(p+2)$.  The  existence and uniqueness of  positive solitary waves  $Q_c \in H^{\frac{\alpha}{2}}(\mathbb{R})$ for $c>1$ and $\alpha > \frac{p}{p+2}$ are also proved in \cite{oruc}. We note that even though only the case $0<\alpha<1$ is considered in \cite{oruc},   the theory  in \cite{franklenzmann} gives the existence of positive solitary waves for $0<\alpha<2$.

Now we discuss the existence of negative solitary wave solutions.
According to the Pohozaev type identity
\begin{equation}\label{energy1}
  \big(\frac{5}{4} c-\frac{3}{4} \big) \int_{\mathbb{{R}}} |D^{\frac{\alpha}{2}} Q_c|^2 dx + (c-1)\int_{\mathbb{{R}}} Q_c^2 dx = \frac{1}{2} \int_{\mathbb{{R}}} Q_c^{p+2} dx
\end{equation}
given in \cite{oruc}, it is possible to say that the identity is satisfied for a negative $Q_c$ when $c<3/5$ and $p$ is odd. To show the existence of such waves we use the result of \cite{franklenzmann}. Setting $Q_c=-R_c$  where $R_c>0$  the equation \eqref{sw1} becomes
\begin{equation}\label{sw2}
 (\frac{3}{4} -\frac{5}{4}c)D^{\alpha}R_c   +(1-c) R_c -\frac{1}{2} R_c^{p+1} =0.
\end{equation}
A scaling argument as
$
R_c(\xi)=(2(1-c))^{1/p}Q\left(\left( \frac{4(c-1)}{5c-3}  \right)^{1/\alpha}   \xi \right)
$
converts \eqref{sw2} into the equation
\begin{equation}\label{ground}
D^{\alpha} Q+Q-Q^{p+1}=0.
\end{equation}
\indent The existence and uniqueness of an even and positive solution $Q\in H^{\alpha/2}(\mathbb{R})$ for \eqref{ground} has been determined in \cite{franklenzmann} for $0<\alpha<2$ and  $0<p<p_{max}$. Here, $p_{\max}$ is the critical exponent given by
\begin{equation}\label{pcond}
p_{\max}(\alpha)=\left\{ \begin{array}{cc}
\frac{2\alpha}{1-\alpha}, &\mbox{for}~~ 0< \alpha < 1  \\
\infty, &\mbox{for}~~ 1\leq \alpha < 2.   \end{array} \right.
\end{equation}
Therefore, we obtain the existence and uniqueness of a positive even solution $R_c$ of the equation \eqref{sw2} and consequently a negative solution $Q_c$ of the equation \eqref{sw1} when $c<3/5$ and $p$ is odd for $0< \alpha < 2$. In Figure 1, we depict existence of solitary waves for different values of $\alpha$ and $c$ with $p=1$. \\
\indent For the case of $\alpha=2$ and $p=1$ the equation \eqref{gfBBM} has the exact solution
\begin{equation}\label{exact}
  u(x,t) = 3({c-1}) \mbox{sech}^2\left(\frac{1}{2} \sqrt{\frac{4(c-1)}{ 5c -3}}(x-ct)\right).
\end{equation}
It is clear that $(\ref{exact})$ agrees with the following statement: the solution is positive when $c>1$ and it is negative when $c<3/5$.

\indent To finish, we note that if $Q_c(x-ct)>0$ is a solitary wave solution for the gfBBM equation, then $-Q_c(x-ct)$ is also a solution when $c>1$ and $p$ is even. However, we do not consider these negative solutions in the current study since  the dynamics for the solutions $Q_c$ and $-Q_c$ are the same.

\begin{figure}[h!]
\centering
\begin{tikzpicture}[scale=5][baseline=0pt]
 \draw[->] (0,0) -- (2.3,0) node[right]{$\alpha$};
 \draw[->] (0,0) -- (0,1.3) node[left]{$c$};
 \draw  (-0.1,0)  node[below]{$0$};
 \draw  (0.33,0)  node[below]{$1/3$};
 %\draw  (0.5,0)  node[below]{$1/2$};
 \draw  (2,0)  node[below]{$2$};
 \draw  (0,0.6)  node[left]{$3/5$};
 \draw  (0,1)  node[left]{$1$};

\path [fill=lightgray!40] (0,0.6) rectangle (2,1);
\path [fill=red!20!] (0,0) rectangle (0.33,1.22);
%\path [fill=yellow] (0.33,0) rectangle (1,0.6);

%\draw[solid] (0.33,0)--(0.33,1.3);
%\draw[dashed] (0.5,0)--(0.5,1.3);
\draw[dashed] (2,0)--(2,1.3);

\draw[dashed] (0,1)--(2,1);
\draw[dashed] (0,0.6)--(1.3,0.6);

\path [fill=yellow!50] (0.33,0) rectangle (2,0.6);
\draw[dashed] (0.33,0)--(0.33,0.6);
\draw[dashed] (0.33,0.6)--(2,0.6);

\draw[dashed] (0.33,0)--(0.33,1.3);
%yaz{\i}lar
\node [above] at (1,0.8) {no nontrivial };
\node [below] at (1.03,0.8) {solitary wave  solution};

%\node [above] at (0.65,0.35) {negative solitary waves};
%\node [below] at (0.65,0.35) {(numerical observation)};
\node [below] at (1,0.35) {negative solitary wave};
\node [above] at (1,1.05) {positive solitary wave };

%\draw[dashed] (1,0)--(1,1.3);
\node [above] at (0.15,0.05) {\rotatebox{90} {Hamiltonian is not well-defined}};
%{\begin{sideways} Hamiltonian is not well-defined \end{sideways}}
\end{tikzpicture}
\caption{Dependence of the existence of solutions on $\alpha$ and $c$ when $p=1$.} \label{table1}
\end{figure}
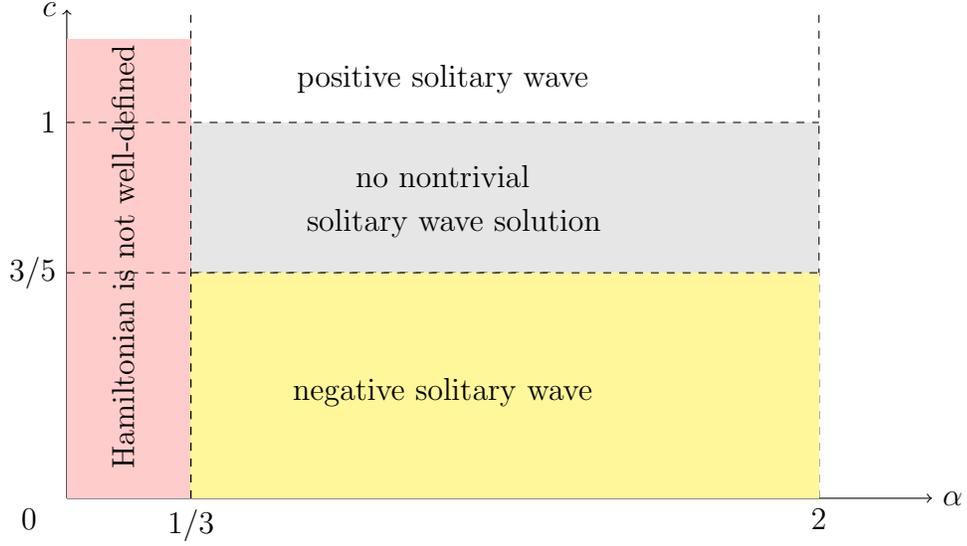

\setcounter{equation}{0}
\section{Spectral and Orbital Stability of Solitary Waves}
In this section, we study the spectral and orbital stability of solitary waves.
To obtain a precise definition of spectral stability, we need to add a perturbation $v$ to the smooth travelling wave $Q_c$ propagating with the same fixed speed $c$ of the form \mbox{$u(x,t) = Q_c(x-ct) + v(x-ct,t)$} in $(\ref{gfBBM})$. Using that $Q_c$ satisfies $(\ref{sw1})$ and performing a truncating by the linear terms in $v$, we obtain  evolution problem
\begin{equation}
	\label{gf-lin}
	v_t =  J \mathcal{L}_c v,
\end{equation}
where $\mathcal{L}_c$ is the second order differential linearized operator
in the form
\begin{equation}
	\mathcal{L}_c
	=\left(\frac{5}{4} c - \frac{3}{4}\right)D^{\alpha}+c-1 -\frac{p+1}{2}Q_c^p.
	\label{hill}
\end{equation}
\indent For $\lambda\in\mathbb{C}$, we consider a growing mode solution of the form $v(x,t)=e^{\lambda t}w(x)$ which yields
\begin{equation}
	\label{spect-prob}
	J \mathcal{L}_c w=\lambda w.	
\end{equation}
Denoting the spectrum of $J\mathcal{L}_c$ by $\sigma(J\mathcal{L}_c)$, the spectral stability of the solitary wave $Q_c$ is defined as follows:

\begin{definition}\label{defistab}
	The solitary wave $Q_c \in H^{\infty}(\mathbb{R})$ is said to be spectrally stable if $\sigma(J\mathcal{L}_c) \subset i\mathbb{R}$ in  $L^2(\mathbb{R})$. Otherwise, that is, if $\sigma(J\mathcal{L}_c)$ in $L^2(\mathbb{R})$ contains a point $\lambda$ with $Re(\lambda)>0$, the solitary wave $Q_c$ is said to be 	spectrally unstable.
\end{definition}

\begin{obs} Following the arguments quoted in the paragraph below \cite[Definition 1.3]{pava}, we see that $(\ref{gf-lin})$ is linearized equation without forcing or damping terms. This kind of evolution model gives, from $(\ref{spect-prob})$, certain symmetries on the spectrum of $\sigma(\mathcal{L}_c)$. Indeed, $\sigma(\mathcal{L}_c)$ will be symmetric with respect to the reflection in the real and imaginary axes so that, it implies that exponentially growing perturbations are always paired with exponentially decaying ones. It is the reason why, in the Definition $\ref{defistab}$, only the spectral parameter $\lambda$ satisfying $Re(\lambda) > 0$ was required.
	
	\end{obs}

The spectral stability  can be studied for the case of smooth solutions which are global in time.
% In fact, the evolution $u(x,t)$ is considered as a perturbation of the smooth solitary wave $Q_c$ given by $u(x,t)=Q_c(x-ct)+v(x-ct,t)$, where $v(x,t)=e^{\lambda t}w(x)$.
Since $w$ solves $(\ref{spect-prob})$, a bootstrap argument can be performed to conclude the smoothness of $w$, so that the evolution of $u$ is also smooth as required. Since the solution $u$ can be considered smooth, we can conclude that the solutions handled in the spectral stability can be considered global in time by Theorem $\ref{teogwp}$.\\
\indent Next, associated to the conservation of mass (\ref{cq2}), we have the basic orthogonality
condition on the perturbation $v \in H^{\alpha}(\mathbb{R})$
to the smooth solitary wave $Q_c \in H^{\infty}(\mathbb{R})$
given by $
	\langle F'(Q_c), v \rangle = 0
$
and it is preserved in the time evolution of the linearized equation
(\ref{gf-lin}). Moreover, it is well known that the spectral stability of the solitary wave
$Q_c$ holds if the linearized operator $\mathcal{L}_c$ is positive
on the orthogonal complement of the spanned subspace $[F'(Q_c)]$
in $L^2(\mathbb{R})$. As it is well-known and assuming that $\ker(\mathcal{L}_c)=[Q_c']$,
the positivity property holds if the number of negative eigenvalues
of the quantity
\begin{equation}\label{matrix}
		\mathcal{I} :=
		\left\langle Q_c + \frac{5}{4}D^{\alpha}Q_c, \mathcal{L}_c^{-1} \left(Q_c + \frac{5}{4}D^{\alpha}Q_c\right)\right\rangle
\end{equation}
coincides with the number of negative eigenvalues of $\mathcal{L}_c$ in $L^2(\mathbb{R})$. Now, since $c\mapsto Q_c$ is a smooth curve of solitary waves, it follows by differentiating (\ref{sw1}) in $c$ that \begin{equation}
	\label{relationphicb}\mathcal{L}_c\left(\partial_cQ_c\right)=-\left(Q_c + \frac{5}{4}D^{\alpha}Q_c\right).\end{equation}
Hence, the quantity $\mathcal{I}$ in (\ref{matrix}) can be rewritten as
\begin{equation}
	\label{matrix-P}
	\mathcal{I}= -\frac{d}{dc}F(Q_c).
\end{equation}
\indent Let us assume that $\ker(\mathcal{L}_c)=[Q_c']$. According with \cite[Theorem 5.3.1]{KP}, we can calculate the exact number of negative eigenvalues of $\mathcal{L}_c|_{\{F'(Q_c)\}^{\bot}}$ by using the index formula as

\begin{equation}
	\label{index-f}
	n\left(\mathcal{L}_c|_{\{F'(Q_c)\}^{\bot}}\right)=n(\mathcal{L}_c)-n(\mathcal{I}),
\end{equation}
where $n(\mathcal{A})$ stands the number of negative eigenvalues of a certain linear operator $\mathcal{A}$ counting multiplicities. Therefore, if $n(\mathcal{L}_c)=n(\mathcal{I})=1$ one has the required spectral stability.\\
\indent To obtain the orbital (nonlinear) stability in the sense of Definition $\ref{defistab}$, we need the following basic result:
\begin{lemma}
	\label{teoest}
	Let $Q_c \in H^{\infty}(\mathbb{R})$ be a solitary wave
	of the equation (\ref{sw1}). Assume that $n(\mathcal{L}_c)=1$ and $\ker(\mathcal{L}_c)=[Q_c']$.
	Suppose that $\Phi=\partial_c Q_c\in H^{\alpha}(\mathbb{R})$ satisfies
	$\langle\mathcal{L}_c\Phi,\Phi\rangle<0$
	and $\langle\mathcal{L}_c\Phi,v\rangle=0$, for all $v \in \Upsilon_0$,
	where
	\begin{equation*}
		\Upsilon_0=\{  v \in H^{\frac{\alpha}{2}}(\mathbb{R}) :\ \ \langle F'(Q_c), v \rangle=0\},
	\end{equation*}
	where $F$ is the conserved quantity associated with the equation (\ref{cq2}).
	Thus, the solitary wave $Q_c$ is orbitally stable in $H^{\frac{\alpha}{2}}(\mathbb{R})$. The same result is valid when considering the negative solitary wave $R_c$ instead of $Q_c$ and the linearized operator
\begin{equation}
	\mathcal{L}_c^-
	=\left(\frac{3}{4}-\frac{5}{4} c \right)D^{\alpha}+1-c -\frac{p+1}{2}Q_c^p
	\label{hill-2}
\end{equation}
instead of $\mathcal{L}_c$.
\end{lemma}
\begin{proof}
	The proof of this result is an adaptation of the general orbital stability approach in \cite{grillakis} and \cite[Section 2]{weinstein}. For additional references, we can cite  \cite[Chapter 6]{angulo} and \cite[Section 4]{natalipastor} regarding the Korteweg-de Vries/Schr\"odinger type equations. All mentioned results can be adapted to our model without further problems. Indeed, first we have $n(\mathcal{L}_c)=1$ and $\ker(\mathcal{L}_c)=[Q_c']$. For $\Phi=\partial_cQ_c$, one has from $(\ref{relationphicb})$ that $\mathcal{L}_c\Phi=-\left(Q_c+\frac{5}{4}D^{\alpha}Q_c\right)$ and since $\langle\mathcal{L}_c\Phi,\Phi\rangle=\mathcal{I}=-\frac{d}{dc}F(Q_c)$, we obtain the orbital stability in the sense of Definition $\ref{defistab}$ provided that $\mathcal{I}<0$ (or equivalently, $d''(c):=\frac{d}{dc}F(Q_c)=-\mathcal{I}>0$ as required in \cite{grillakis}). Gathering all informations, we have
	\begin{equation}\label{positivity}
		\langle\mathcal{L}_cv,v\rangle \geq C||v||_{H^{\frac{\alpha}{2}}}^2,
		\end{equation}
	for all $v\in \Upsilon_0\cap [Q_c']^{\bot}$. The positivity in $(\ref{positivity})$ allows us to conclude the orbital stability in the energy space $H^{\frac{\alpha}{2}}(\mathbb{R})$ since $Q_c$ is a minimum of the energy functional $H$ in $(\ref{hamilton})$ for fixed momentum $F$ in $(\ref{cq2})$.
\end{proof}

 \begin{obs} Using Lemma $\ref{teoest}$ we can affirm, since  $n(\mathcal{L}_c)=n(\mathcal{I})=1$, that \textit{the spectral stability implies the orbital stability in our case and vice and versa}.\end{obs}

 \indent Next, for any $k\in\mathbb{N}$, if $n\left(\mathcal{L}_c|_{\{F'(Q_c)\}^{\bot}}\right)=2k-1$ we have at least one eigenvalue with positive real part for the operator $\partial_x\mathcal{L}_c$ and then, the solitary wave is spectrally unstable (see \cite{pava}, \cite{KS} and references therein).\\
\indent In order to calculate $n\left(\mathcal{L}_c|_{\{F'(Q_c)\}^{\bot}}\right)$ using identity $(\ref{index-f})$, we first need to know the behavior of the non-positive eigenvalues of $\mathcal{L}_c$. Since the equation $(\ref{gfBBM})$ is invariant under the symmetry of translation, we have at least that $Q_c'\in\ker(\mathcal{L}_c)$. However this information is not enough to calculate $\mathcal{I}$ in $(\ref{matrix})$ since we do not know if $\mathcal{L}_c^{-1}\left(Q_c+\frac{5}{4}D^{\alpha}Q_c\right)$ is well defined. Next result give us in fact that $n(\mathcal{L}_c)=z(\mathcal{L}_c)=1$, where  $z(\mathcal{L}_c)$ indicates the dimension of $\ker(\mathcal{L}_c)$.\\

\begin{proposition}
	\label{theolinop}
	Let  $\alpha\in\left(0,2\right]$ be fixed and consider $0<p<p_{max}$, with $p_{max}$ defined as in $(\ref{pcond})$. For $c>1$, the linearized self-adjoint operator $\mathcal{L}_c$ in (\ref{hill}) defined in $L^2(\mathbb{R})$, with the dense domain $H^{\alpha}(\mathbb{R})$ satisfies $n(\mathcal{L}_c)=z(\mathcal{L}_c)=1$. Moreover, the essential spectrum of $\mathcal{L}_c$  is given by $\sigma_{{\rm
			ess}}(\mathcal{L}_{c})=\left[c-1,+\infty\right)$.
\end{proposition}
  \begin{proof}
  	According with \cite{franklenzmann}, let $\phi$ be the unique solitary wave profile associated with the equation
  	
  	\begin{equation}\label{fl}
  		D^{\alpha}{\phi}+ {\phi}-{\phi}^{p+1}=0.
  	\end{equation}
 For $ \theta = \big(\frac{4(c-1)}{5c-3}\big)^{1/{\alpha}}$, we have that  $Q_c$ given by \begin{equation}
 	\label{transf}
 Q_c(x)=\big({2(c-1)}\big)^{1/p}{\phi}\big( \theta x \big)\end{equation} is also the unique solitary wave which solves equation $(\ref{sw1})$.\\
 \indent Next, consider the linearized operator associated with the solitary wave $\phi$
  \begin{equation}\label{operafrac}
 \mathcal{P} = D^{\alpha}+1-(p+1){\phi^p}.
\end{equation}
According with the arguments in \cite{franklenzmann}, we have that $\mathcal{P}$ is a self-adjoint operator defined in $L^2(\mathbb{R})$ with dense domain $H^{\alpha}(\mathbb{R})$, the essential spectrum is $\sigma_{ess}(\mathcal{P})=[1,+\infty)$ and $n(\mathcal{P})=z(\mathcal{P})=1$. We follow the idea given by \cite{pava} in order to relate both operators $\mathcal{P} $ and $\mathcal{L}_c $. In fact, using $(\ref{transf})$
 and the dilation operator
$(T_{\theta}f)(x)=f(\theta x),$ we have that
\begin{equation}\label{operators1}
  \mathcal{L}_c= (c-1) T_{\theta} \mathcal{P} T_{\theta}^{-1}.
\end{equation}
Relation $(\ref{operators1})$ implies that the operators $\mathcal{P} $ and $\mathcal{L}_c $ have the same spectral structure,
i.e. $spec(\mathcal{L}_c)=\left\{(c-1)\lambda: \lambda\in spec(\mathcal{P})\right\}$. Since both operators are self-adjoint we have that  $n(\mathcal{L}_c)=z(\mathcal{L}_c)=1$ and $\sigma_{{\rm
		ess}}(\mathcal{L}_{c})=\left[c-1,+\infty\right)$ as requested.
\end{proof}

The spectrum of the linearized operator for the negative solitary wave solutions is given with the following Proposition:

\begin{proposition}
	\label{theolinop-2}
	Let  $\alpha\in\left(0,2\right]$ be fixed and consider $0<p<p_{max}$, with $p$ odd and $p_{max}$ defined as in $(\ref{pcond})$. For $c<3/5$, the linearized self-adjoint operator $\mathcal{L}_c^-$
defined in $L^2(\mathbb{R})$, with the dense domain $H^{\alpha}(\mathbb{R})$ satisfies $n(\mathcal{L}_c^-)=z(\mathcal{L}_c^-)=1$. Moreover, the essential spectrum of $\mathcal{L}_c^-$  is given by $\sigma_{{\rm			ess}}(\mathcal{L}_{c}^-)=\left[1-c,+\infty\right)$.
\end{proposition}

\begin{proof}
  We can relate the operator $\mathcal{L}_c^-$ with $\mathcal{P}$ with the dilation operator $(T_{\theta}f)(x)=f(\theta x)$ where  $\theta = \big(\frac{4(c-1)}{5c-3}\big)^{1/{\alpha}}$.  We have that
\begin{equation}\label{operators}
  \mathcal{L}_c^-= (1-c) T_{\theta} \mathcal{P} T_{\theta}^{-1}.
\end{equation}
Relation $(\ref{operators})$ implies that the operators $\mathcal{P} $ and $\mathcal{L}_c ^-$ have the same spectral structure,
i.e.   $spec(\mathcal{L}_c^-)=\left\{(1-c)\lambda: \lambda\in spec(\mathcal{P})\right\}$.
The rest of the proof is the same as in Proposition \ref{theolinop}.
\end{proof}
%%\begin{obs}
%%	If $p$ is an odd number, the result in Proposition $\ref{theolinop}$ can be extended for $c\in (0,\frac{3}{5})$ and its proof is similar. Notice in this case that $\frac{4(c-1)}{5c-3}>0$ and the essential spectrum is away from zero.
%%	\end{obs}

\indent We have a convenient result for the quantity $\mathcal{I}$ given by $(\ref{matrix-P})$
\begin{lemma}
	\label{theoF}
Let  $p>0$ be fixed and consider $\alpha>\frac{p}{p+2}$. For $c\in \left(0,\frac{3}{5}\right)\cup (1,+\infty)$, we have
	\begin{equation*}
		\mathcal{I}=-\frac{d}{dc} \left[(c-1)^{2/p} \bigg(\frac{5c-3}{4(c-1)}\bigg)^{1/{\alpha}}  \left( 1 + \frac{5p(c-1)}{(5c-3)(\alpha(p+2) -p)} \right) \right]\| {\phi} \|^2.
	\end{equation*}
	\end{lemma}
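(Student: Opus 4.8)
The plan is to start from the identity $\mathcal{I}=-\frac{d}{dc}F(Q_c)$ already recorded in \eqref{matrix-P}, so that the entire task reduces to producing a closed form for $F(Q_c)$ as a function of $c$ and then differentiating. Writing $F(Q_c)=\frac12\|Q_c\|^2+\frac58\|D^{\alpha/2}Q_c\|^2$ from \eqref{cq2}, I would substitute the profile relation \eqref{transf}, namely $Q_c(x)=(2(c-1))^{1/p}\phi(\theta x)$ with $\theta=\big(\tfrac{4(c-1)}{5c-3}\big)^{1/\alpha}$, and evaluate each quadratic term through the scaling rule for dilations.

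Setting $A=(2(c-1))^{1/p}$ and using $\widehat{\phi(\theta\cdot)}(\xi)=\theta^{-1}\hat\phi(\xi/\theta)$ together with Plancherel, one obtains $\|Q_c\|^2=A^2\theta^{-1}\|\phi\|^2$ and $\|D^{\alpha/2}Q_c\|^2=A^2\theta^{\alpha-1}\|D^{\alpha/2}\phi\|^2$, where the factor $\theta^\alpha$ produced in the second term is exactly $\tfrac{4(c-1)}{5c-3}$. Hence, after factoring out $\tfrac12 A^2\theta^{-1}\|\phi\|^2$ and using $\theta^{\alpha-1}=\theta^{\alpha}\theta^{-1}$, the bracketed factor $1+\tfrac54\theta^\alpha\,\tfrac{\|D^{\alpha/2}\phi\|^2}{\|\phi\|^2}$ will match the claimed shape as soon as the ratio $\|D^{\alpha/2}\phi\|^2/\|\phi\|^2$ is pinned down.

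The crux, and the step I expect to be the main obstacle, is to compute this ratio for the ground state $\phi$ solving \eqref{fl}. I would establish two Pohozaev-type identities: pairing the equation with $\phi$ gives $\|D^{\alpha/2}\phi\|^2+\|\phi\|^2=\int\phi^{p+2}$, while the dilation identity, obtained by differentiating the action evaluated along $\phi(\lambda\cdot)$ at $\lambda=1$ and using the homogeneities $\lambda^{\alpha-1},\lambda^{-1},\lambda^{-1}$ of the three terms, gives $\tfrac{\alpha-1}{2}\|D^{\alpha/2}\phi\|^2-\tfrac12\|\phi\|^2+\tfrac{1}{p+2}\int\phi^{p+2}=0$. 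Eliminating $\int\phi^{p+2}$ yields $\|D^{\alpha/2}\phi\|^2=\tfrac{p}{\alpha(p+2)-p}\|\phi\|^2$, which is well defined precisely under the hypothesis $\alpha>\tfrac{p}{p+2}$; the analogous identity at the level of $Q_c$ is the Pohozaev relation \eqref{energy1}. The delicate point is justifying the fractional scaling identity rigorously for the non-local operator $D^\alpha$, which can be done on the Fourier side or imported from the regularity and decay estimates for $\phi$ in \cite{franklenzmann}.

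Finally, inserting this ratio and collecting the $c$-dependence gives $F(Q_c)=\mathrm{const}\cdot(c-1)^{2/p}\big(\tfrac{5c-3}{4(c-1)}\big)^{1/\alpha}\big(1+\tfrac{5p(c-1)}{(5c-3)(\alpha(p+2)-p)}\big)\|\phi\|^2$, where $\tfrac54\theta^\alpha\cdot\tfrac{p}{\alpha(p+2)-p}=\tfrac{5p(c-1)}{(5c-3)(\alpha(p+2)-p)}$ produces the second summand in the bracket; the positive multiplicative constant $2^{2/p-1}$ coming from $A^2$ and the prefactor $\tfrac12$ is immaterial for the sign and eigenvalue counting carried out afterwards. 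Differentiating in $c$ and invoking \eqref{matrix-P} then delivers the stated formula for $\mathcal{I}$. The same computation transfers verbatim to the negative branch $c<3/5$ through Proposition \ref{theolinop-2}, since the underlying scaling relation has identical structure there.
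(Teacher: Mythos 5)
Your proposal is correct and follows essentially the same route as the paper: the paper likewise starts from $\mathcal{I}=-\frac{d}{dc}F(Q_c)$ in \eqref{matrix-P}, invokes the Pohozaev relation \eqref{pohoz} (cited from \cite[Theorem 4.1]{oruc}, which you instead re-derive at the level of $\phi$ via the pairing and dilation identities) to express $\|D^{\alpha/2}Q_c\|^2$ through $\|Q_c\|^2$, and then uses the scaling \eqref{transf} to reduce everything to $\|\phi\|^2$ before differentiating in $c$. Your observation that a harmless positive constant $2^{2/p-1}$ is absorbed (the paper's statement silently drops it, and it is indeed immaterial for the sign of $\mathcal{I}$) is accurate.
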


\begin{proof}
Similar arguments as performed in \cite[Theorem 4.1]{oruc} give us that
\begin{equation}\label{pohoz}
  \int_{-\infty}^{+\infty} |D^{\frac{\alpha}{2}} Q_c(\xi)  |^2 d\xi = \frac{4p(c-1)}{(5c-3)(\alpha(p+2)-p)}  \int_{-\infty}^{+\infty} Q_c^2(\xi) d\xi.
\end{equation}
Transformation $(\ref{transf})$ applied in the last integral of $(\ref{pohoz})$ enables us to conclude
\begin{equation}\label{scalL2}
  \int_{-\infty}^{+\infty}  |Q_c(\xi)|^2 d\xi  =  \frac{\big( 2(c-1)\big)^{2/p} (5c-3) }{4(c-1)^{1/\alpha}} \int_{-\infty}^{+\infty} |{\phi}({\xi})|^2 d{\xi}.
\end{equation}
Finally, combining $(\ref{pohoz})$ and $(\ref{scalL2})$ we obtain
\begin{eqnarray*}
\nonumber
\mathcal{I}&=&- \frac{d}{dc}F(Q_c)=-\frac{1}{2}\frac{d}{dc}\int_{-\infty}^{+\infty}\left(Q_c^2+\frac{5}{4}|D^{\frac{\alpha}{2}}Q_c|^2\right)d\xi\\
%=M_p(c)\| {Q} \|^2.
\nonumber
   &=&- 2^{2/p-1}\frac{d}{dc} \left[(c-1)^{2/p} \bigg(\frac{5c-3}{4(c-1)}\bigg)^{1/{\alpha}}  \left( 1 + \frac{5p(c-1)}{(5c-3)(\alpha(p+2) -p)} \right) \right]\| {\phi} \|^2. \\
   &  ~~&
\end{eqnarray*}
%where
%\begin{equation*}
%M_p(c) = \frac{d}{dc}  \left[(c-1)^{2/p} \bigg(\frac{5c-3}{4(c-1)}\bigg)^{1/{\alpha}}  \left( 1 + \frac{5p(c-1)}{(5c-3)(\alpha(p+2) -p)} \right) \right].
%\end{equation*}
%Therefore the sign of $d''(c)$ depends on the sign of  $M_p(c)$ and the positivity of $M_p(c)$ implies the stability of solitary wave solutions.
\end{proof}

\indent Let  $p>0$ be fixed and consider $\alpha>\frac{p}{p+2}$. Define $\mathcal{K}:(0,\frac{3}{5})\cup(1,+\infty)\rightarrow \mathbb{R}$ given by
\begin{equation}\label{derKc1}\mathcal{K}(c)= \left[(c-1)^{2/p} \bigg(\frac{5c-3}{4(c-1)}\bigg)^{1/{\alpha}}  \left( 1 + \frac{5p(c-1)}{(5c-3)(\alpha(p+2) -p)} \right) \right].
\end{equation}
%For $p=1$ and $\alpha\in\left(\frac{1}{3},\frac{1}{2}\right]$, the derivative of $\mathcal{K}$ in terms of $c$ has a unique zero in the set $c \in (0,\frac{3}{5})\cup(1,+\infty)$ as
%\begin{equation}\label{zerocp1}
%	c_0 = \frac{9\alpha+2+\sqrt{2}\sqrt{3\alpha-1}}{15\alpha}>1,
%	\end{equation}
%and for $\alpha\in\left(1,2\right]$, the derivative of $\mathcal{K}$ in terms of $c$ has a unique zero in the set $c \in (0,\frac{3}{5})\cup(1,+\infty)$ as
%\begin{equation}\label{zerocp1}
%	c_0 = \frac{9\alpha+2+\sqrt{2}\sqrt{3\alpha-1}}{15\alpha}<3/5.
%	\end{equation}
%%%%%%%%%%%%%%%%%%%%%%%%%%%%%%%%%%%%

\noindent When  $p=1$ the derivative of $\mathcal{K}$ has the two roots in terms of $c$ given by
\begin{equation}\label{zerocp1}
	c_1 = \frac{9\alpha+2+\sqrt{2}\sqrt{3\alpha-1}}{15\alpha},~~~~~~~~
~   c_2 = \frac{9\alpha+2-\sqrt{2}\sqrt{3\alpha-1}}{15\alpha}.
\end{equation}
%There exists positive solitary wave solutions for  $c>1$. For $\alpha>0$ the root $c_2<1$. On the other hand $c_1>1$ for $\alpha\in\left(\frac{1}{3},\frac{1}{2}\right]$.
\indent The variations of the roots $c_1$ and $c_2$  with $\alpha$ are presented in the left panel of Figure 2.

\indent We have the following scenarios:\\\\
\indent (i) When  $c>1$, there exist positive solitary wave solutions. If $\alpha>0$, then $c_2<1$. On the other hand, we see that $c_1>1$ for $\alpha\in\left(\frac{1}{3},\frac{1}{2}\right)$.\\\\
\indent (ii) When  $0<c<3/5$, there exist negative solitary wave solutions. For $0<\alpha<2$ we have $c_1>3/5$. On the other hand, it follows that $c_2<3/5$ for $\alpha\in\left(1,+\infty\right)$.\\\\

\begin{figure}
 \begin{minipage}[h]{0.45\linewidth}
   \includegraphics[scale=0.6]{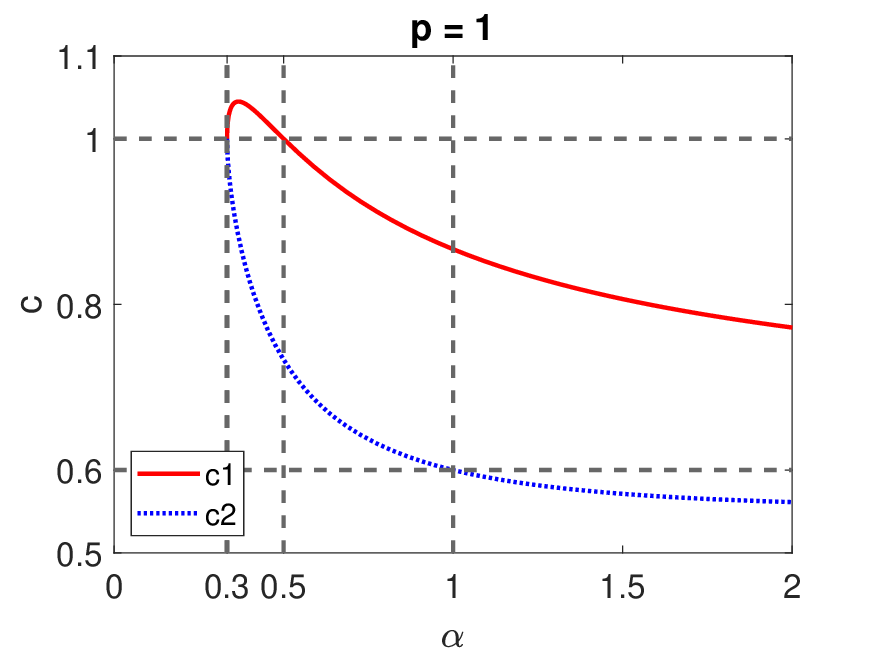}
 \end{minipage}
\hspace{30pt}
\begin{minipage}[h]{0.45\linewidth}
   \includegraphics[scale=0.6]{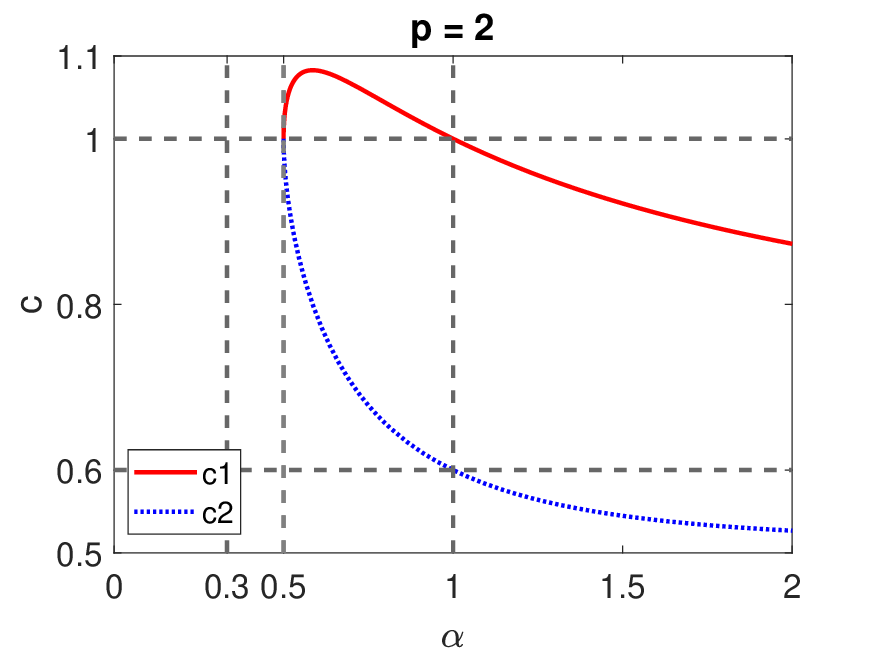}
 \end{minipage}
  \caption{\small Variations of the roots of the derivative of $\mathcal{K}$ with $\alpha$ for $p=1$ and $p=2$.}
\end{figure}
\indent Now we investigate the sign of $\frac{d}{dc}\mathcal{K}(c)$ to determine the stability:
\begin{itemize}
  \item  If $\frac{1}{3}<\alpha\leq \frac{1}{2}$, one has that $\frac{d}{dc}\mathcal{K}(c)<0$ for $c\in (0,\frac{3}{5})\cup(1,c_1)$ and $\frac{d}{dc}\mathcal{K}(c)>0$ for $c>c_1$.
    \item  If $\frac{1}{2}<\alpha\leq 1$, there is no zeros for $\frac{d}{dc}\mathcal{K}(c)$ in the set $c \in (0,\frac{3}{5})\cup(1,+\infty)$.  Here, $\frac{d}{dc}\mathcal{K}(c)<0$ for all $c\in(0,\frac{3}{5})$ and $\frac{d}{dc}\mathcal{K}(c)>0$ for all $c>1$.
  \item  If $\alpha>1$, one has that $\frac{d}{dc}\mathcal{K}(c)>0$ for $c\in (c_2,\frac{3}{5}) \cup (1, +\infty)$ and $\frac{d}{dc}\mathcal{K}(c)<0$ for $c\in (0,c_2)$.
\end{itemize}

For the case $\frac{1}{3}<\alpha\leq \frac{1}{2}$, analysis above, Lemma $\ref{teoest}$ and Proposition $\ref{theolinop}$ enable us to conclude that the solitary wave $Q_c$ is spectrally (orbitally) stable if $c>c_1$. The solitary wave $Q_c$ is spectrally unstable in the case $1<c<c_1$ or $0<c<3/5$. The same arguments can be used for the case $\alpha\in(\frac{1}{2},1]$. In fact, the solitary wave $Q_c$ is spectrally stable for all $c>1$ and spectrally unstable for all $0<c<3/5$.  Finally when  $\alpha\in(1,2]$, the solitary wave $Q_c$ is spectrally stable for all $c>1$ or $c_2<c<3/5$ and spectrally unstable for $0<c<c_2$.\\
We present the results in Figure 3.

\begin{figure}
\centering
\begin{tikzpicture}[scale=6][baseline=0pt]
 \draw[->] (0,0) -- (2.3,0) node[right]{$\alpha$};
 \draw[->] (0,0) -- (0,1.3) node[left]{$c$};
 \draw  (-0.1,0)  node[below]{$0$};
 \draw  (0.33,0)  node[below]{$1/3$};
 \draw  (0.5,0)  node[below]{$1/2$};
 \draw  (1,0)  node[below]{$1$};
 \draw  (2,0)  node[below]{$2$};
 \draw  (0,0.6)  node[left]{$3/5$};
 \draw  (0,1)  node[left]{$1$};

\path [fill=lightgray!40] (0,0.6) rectangle (2,1);
\path [fill=red!20!] (0,0) rectangle (0.33,1.22);
%\path [fill=yellow] (0.33,0) rectangle (1,0.6);

%\draw[solid] (0.33,0)--(0.33,1.3);
%\draw[dashed] (0.5,0)--(0.5,1.3);

\draw[dashed] (0,1)--(2,1);
\draw[dashed] (2,0)--(2,1.3);
\draw[dashed] (0,0.6)--(2,0.6);

\draw[dashed] (0.33,0)--(0.33,0.6);
\draw[dashed] (0.33,0.6)--(1,0.6);

\draw[dashed] (0.33,0)--(0.33,1.3);
%yaz{\i}lar
\node [above] at (0.8,0.8) {no nontrivial };
\node [below] at (0.83,0.8) {solitary wave  solution};

%\node [above] at (0.65,0.35) {negative solitary waves};
%\node [below] at (0.65,0.35) {(numerical observation)};
\node [below] at (0.85,0.35) {negative solitary wave};
\node [above] at (0.85,1.05) {positive solitary wave };

\draw[dashed] (1,0)--(1,1.3);
\draw[dashed] (1/2,0)--(1/2,1.3);
\node [above] at (0.15,0.25) {\rotatebox{90} {Hamiltonian is not well-defined}};
%{\begin{sideways} Hamiltonian is not well-defined \end{sideways}}

\draw[thick, domain=0.3334:0.5] plot (\x, {(-4*\x-2+3*\x*sqrt(6*\x-2))/(-10*\x+5*\x*sqrt(6*\x-2))});
%\addplot[name path=F,domain={0.334:0.5}] {(-4*\x-2+3*\x*sqrt(6*\x-2))/(-10*\x+5*\x*sqrt(6*\x-2))}
%\path [fill=red!5] (0.5,1) rectangle (1,1.2);
%\node [above] at (0.6,1.05) {nonlinear stability};
\draw[thick, domain=1:2] plot (\x, {(4*\x+2+3*\x*sqrt(6*\x-2))/(10*\x+5*\x*sqrt(6*\x-2))});
\draw [line width=0.7]  (1,0.6)--(2,0.6);
\draw [line width=0.7]  (2,0.56)--(2,0.6);
\draw [line width=0.7]  (2,1)--(2,1.3);
\draw [line width=0.7]  (0.5,1)--(2,1);
\draw [line width=0.7] (0.33,1)--(0.33,1.3);

\end{tikzpicture}
\caption{The cases for the existence of solutions when $p=1$.} \label{table12}
\end{figure}
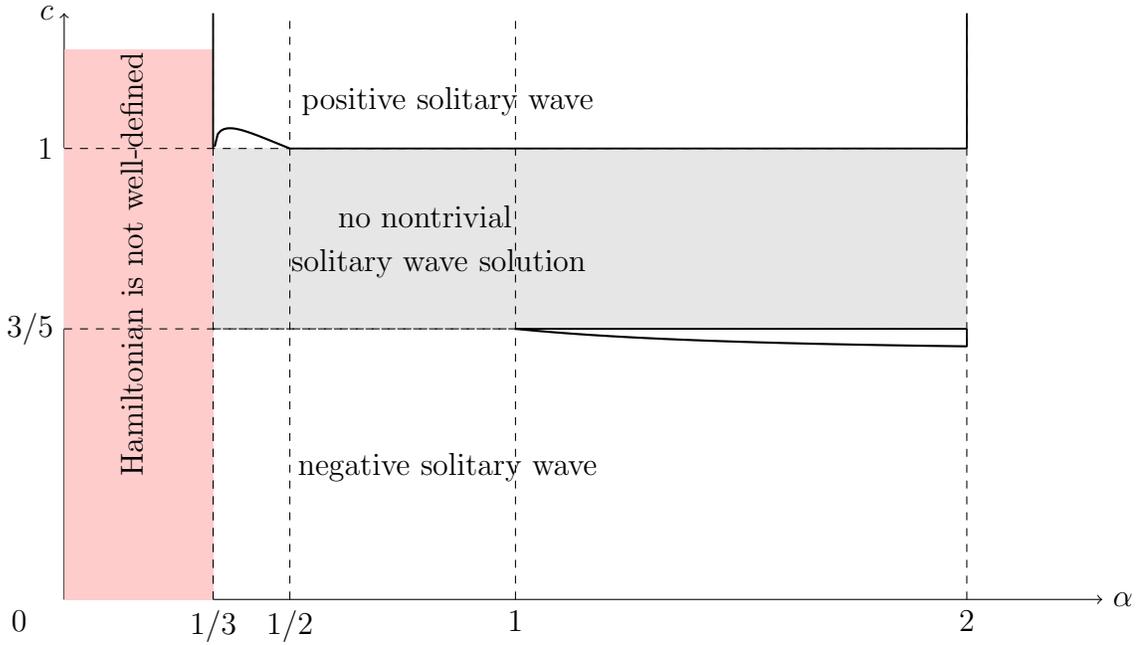
%%%%%%%%%%%%%%%%%%%%%%%%%%%%%%%%%%%%%%%%

\indent For the case $p=2$ and $\alpha\in\left(\frac{1}{2},1\right]$, we have a different scenario. For the even values of $p$ there are no negative wave solutions therefore we are considering only the case $c>1$ where there exist positive solitary waves.  In this case
\begin{itemize}
  \item For $\frac{1}{3}<\alpha\leq \frac{1}{2}$  and for $1<\alpha\leq 2$ we have that  $\displaystyle \frac{d}{dc}\mathcal{K}(c)>0$ for all $c\in (1,+\infty)$
  \item For $\frac{1}{2}<\alpha\leq 1$ we have that  $\displaystyle \frac{d}{dc}\mathcal{K}(c)>0$ for all $(c_1,+\infty)$ and  $\displaystyle \frac{d}{dc}\mathcal{K}(c)<0$ for all $c\in(1,c_1)$ where
      $$c_1=\frac{3\alpha+1+\sqrt{2\alpha-1}}{5\alpha}.$$
\end{itemize}
The variations of the roots of $\displaystyle \frac{d}{dc}\mathcal{K}(c)$  with $\alpha$ are presented in the right  panel of Figure 2.
 This analysis together with Proposition $\ref{theolinop}$ enable us to conclude that the solitary wave $Q_c$ is spectrally (orbitally) stable,
for all $c>1$ when $\alpha\in (\frac{1}{3}, ~\frac{1}{2} ] \cup (1,2]$ and for all $c>c_1$ when $\alpha\in (\frac{1}{2}, 1 )$. The solitary wave $Q_c$ is spectrally unstable if $1<c<c_1$  for the case  $\alpha\in (\frac{1}{2}, 1 )$.\\
In case of general values of $p$, the derivative of $\mathcal{K}(c)$ has the roots:
\begin{eqnarray*}
 c_1=\frac{6\alpha+2p+3\alpha p +\sqrt{2}p\sqrt{2 \alpha-p + \alpha p} }{5 \alpha(p+2)},\hskip1cm
  c_2=\frac{6\alpha+2p+3\alpha p -\sqrt{2}p\sqrt{2 \alpha-p + \alpha p} }{5 \alpha(p+2)}
\end{eqnarray*}
where $c_1>c_2$ for all $\alpha$ and $p$. The above stability analysis for the cases $p=1$ and $p=2$ can be generalized for even and odd values of $p$.

%\indent For the case $p=2$ and $\alpha\in\left(\frac{1}{2},1\right]$, we have a different scenario. For  $\frac{1}{2}<\alpha\leq 1$, we obtain $\frac{d}{dc}\mathcal{K}(c)>0$ for all $c\in(0,\frac{3}{5})\cup (c_0,+\infty)$ and  $\frac{d}{dc}\mathcal{K}(c)<0$ for all $c\in(1,c_0)$. Here, $c_0$ is given by
%$$c_0=4\frac{3\alpha+1+\sqrt{2\alpha-1}}{\alpha}.$$
%If $1<\alpha\leq 2$, we obtain $\frac{d}{dc}\mathcal{K}(c)>0$ for all $c\in (0,c_1)\cup (1,+\infty)$ and $\frac{d}{dc}\mathcal{K}(c)<0$ for all $c\in (c_1,\frac{3}{5})$. In this case, $c_1$ is a different root (smaller than $c_0$) for the derivative of $\mathcal{K}$ in terms of $c$ given by
%$$c_1=4\frac{3\alpha+1-\sqrt{2\alpha-1}}{\alpha}.$$
%If $\alpha\in(\frac{1}{2},1]$, analysis above and Proposition $\ref{theolinop}$ enable us to conclude that the solitary wave $Q_c$ is spectrally (orbitally) stable, if $c>c_0$ and spectrally unstable, if $1<c<c_0$. For the case $\alpha\in(1,2]$, the solitary wave $Q_c$ is spectrally stable for all $c>1$.\\
%\indent For $p\geq 3$...

\setcounter{equation}{0}
\section{Numerical Results}
In this section, we first construct the solitary wave solutions by using Petviashvili's iteration  method.
The Petviashvili method  for the gfBBM equation is given by
\begin{equation}
   \widehat{Q}_{n+1}(k)=\frac{(M_n)^{\nu}}{2\left[\left(\displaystyle\frac{5c}{4}-\frac{3}{4}\right)|k|^{\alpha} +c-1\right]}\widehat{Q_{n}^{p+1}}(k) \label{scheme}
\end{equation}
with stabilizing factor
\begin{equation*}
  M_n=\displaystyle\frac{\displaystyle\int_{\mathbb{R}} \left[\left(\displaystyle\frac{5c}{4}-\frac{3}{4}\right)|k|^{\alpha} +c-1\right] [\widehat{Q}_{n}(k)]^2 dk }{\frac{1}{2}\int_{\mathbb{R}}\displaystyle \widehat{Q}^{p+1}_{n}(k) \widehat{Q}_{n}(k)dk },
\end{equation*}
for the parameter $\nu=({p+1})/{p}$. Here $\widehat{Q}$ denotes the Fourier transform of $Q_c$. Then we investigate time evolution of the  solutions by using a numerical scheme combining a Fourier pseudo-spectral method for space and a fourth order Runge-Kutta method for the time integration.  We assume that $u(x,t)$ has periodic boundary condition $u(-L,t)=u(L,t)$ on the truncated domain $(x,t) \in [-L, L] \times [0,T]$.
In the following numerical experiments the space interval and the number of spatial grid points are chosen as $[-4096, 4096]$ and $N=2^{18}$, respectively. The time step is $\Delta t=5\times 10^{-4}$.
We refer \cite{oruc} for the details of the numerical methods.

\begin{figure}[htb!]
 \begin{minipage}[t]{0.45\linewidth}
   \includegraphics[width=3in]{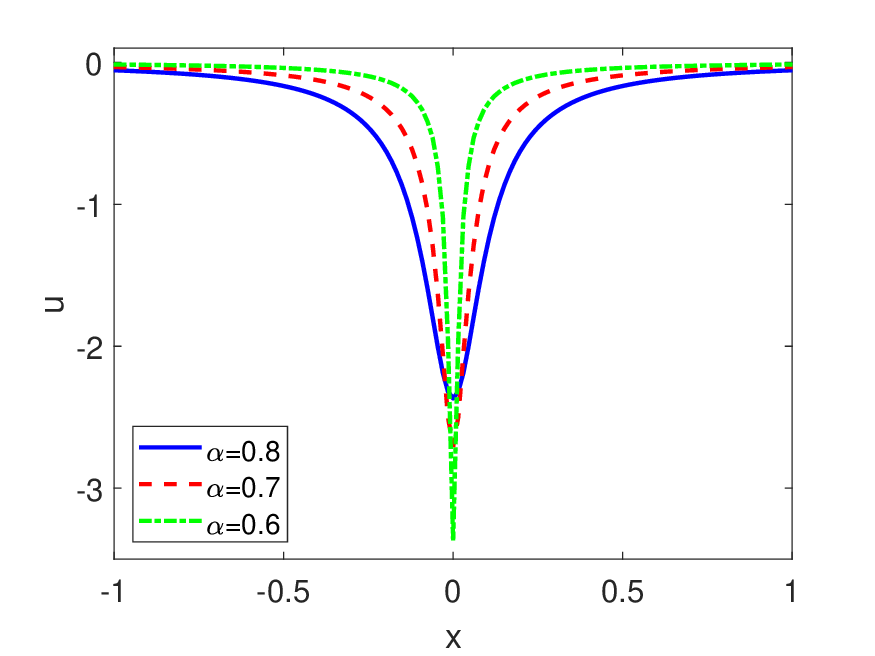}
 \end{minipage}
 \hspace{30pt}
\begin{minipage}[t]{0.45\linewidth}
   \includegraphics[width=3in]{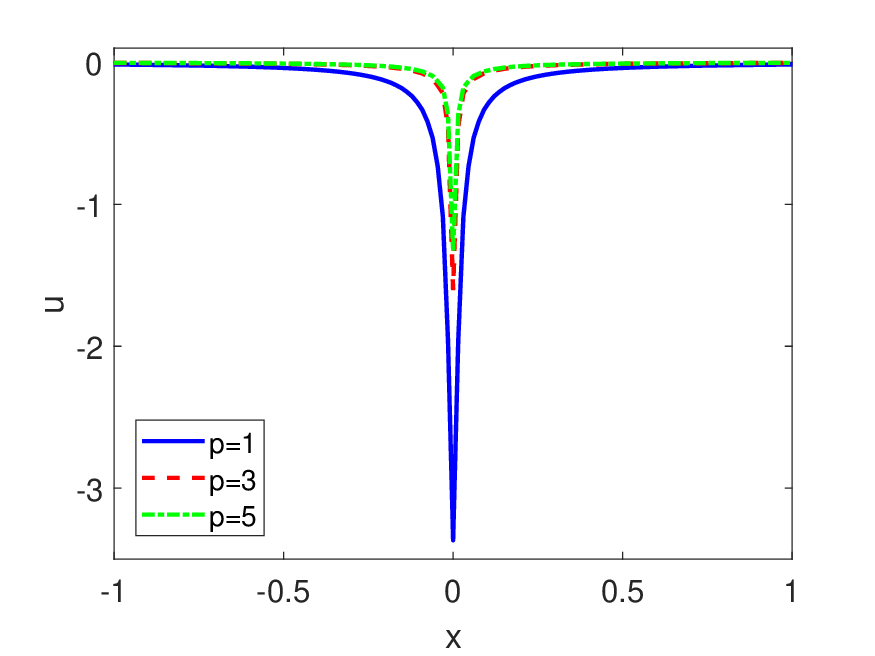}
 \end{minipage}
  \caption{Variation of negative solitary wave profiles with $\alpha$ (left panel) and with $p$ (right panel) when $c=0.5$. }
\label{fig:Nsw}
\end{figure}

We present the negative solitary wave profiles for various values of $\alpha$ when $p$ is fixed, and for various values of $p$ when $\alpha$ is fixed in Figure \ref{fig:Nsw}. In both cases we choose $c=0.5$. In the case of positive solitary waves it is observed in \cite{oruc} that decreasing the order of fractional derivative and increasing the order of nonlinearity have the same effect on the solutions. However, we do not observe the same behaviour for the negative solitary waves. The amplitude of the negative solitary wave solution increases with the decreasing values of $\alpha$ whereas the amplitude decreases with the increasing values of $p$.

In order to study the stability of solitary wave solutions we choose the initial condition as a perturbation of the solitary wave solution $Q_c$ obtained by Petviashvili method. Hence the initial condition is of the form
\begin{equation}\label{initialcond}
  u_0=\gamma Q_c(x)
\end{equation}
where $\gamma$ is the perturbation parameter. In the following numerical experiments we choose $\gamma=1.1$. As a control of the numerical accuracy we make sure that the  error $|F(t)-F(0)|$ in the conserved quantity is less than $10^{-5}$. We set $p=1$  for the rest of the numerical experiments. %The results are similar for $p=2$.

\begin{figure}[htb!]
 \begin{minipage}[t]{0.45\linewidth}
   \includegraphics[width=3in]{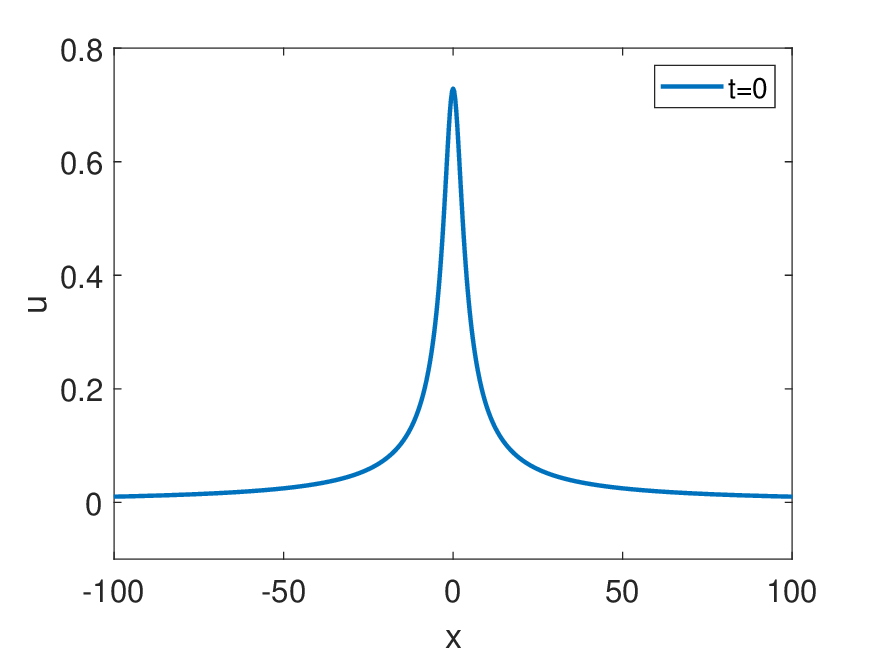}
 \end{minipage}
 \hspace{30pt}
\begin{minipage}[t]{0.45\linewidth}
   \includegraphics[width=3in]{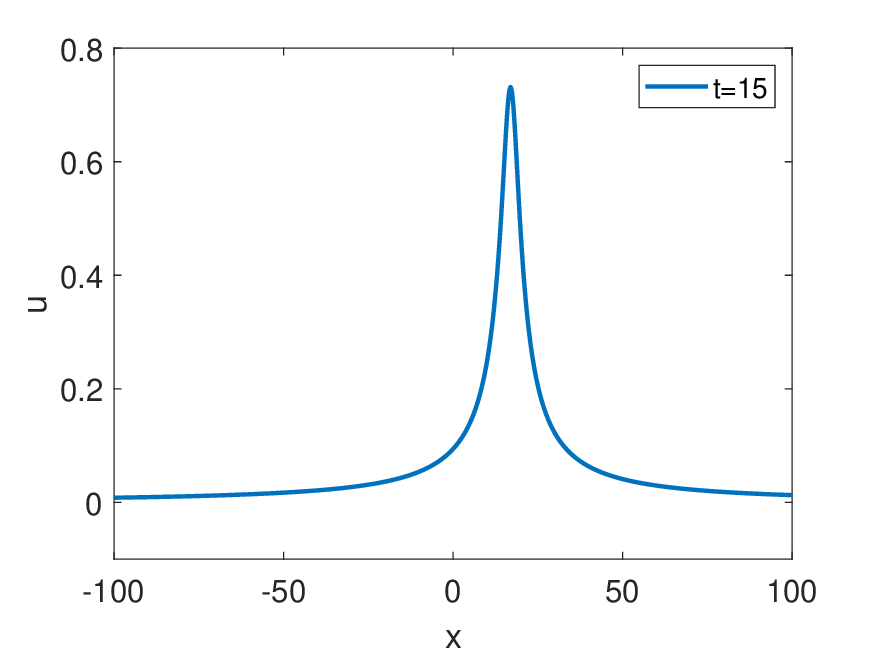}
 \end{minipage}
  \caption{Evolution of a perturbed orbitally stable positive solitary wave with, $\alpha=0.6$ and $c=1.1$.}
 \label{fig:stable1}
\end{figure}

In Figure \ref{fig:stable1}, we illustrate propagation of the perturbed positive solitary wave solution for $\alpha=0.6$ and $c=1.1$.  Analytical results indicate an orbital stability for these values.  The numerical result indicates a nonlinear  stability which is compatible with theoretical result.

\begin{figure}[htb!]
 \begin{minipage}[t]{0.45\linewidth}
   \includegraphics[width=3in]{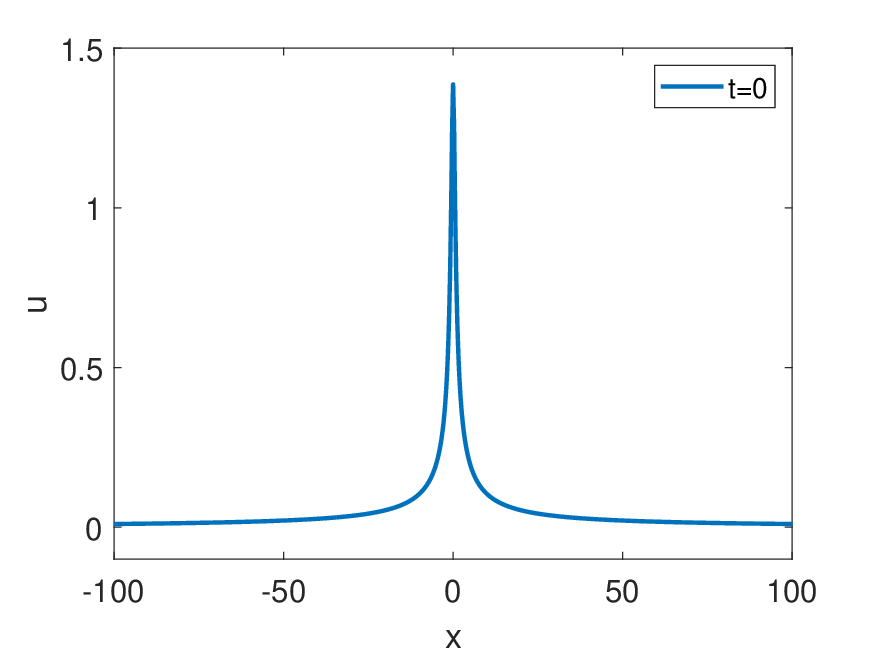}
 \end{minipage}
 \hspace{30pt}
\begin{minipage}[t]{0.45\linewidth}
   \includegraphics[width=3in]{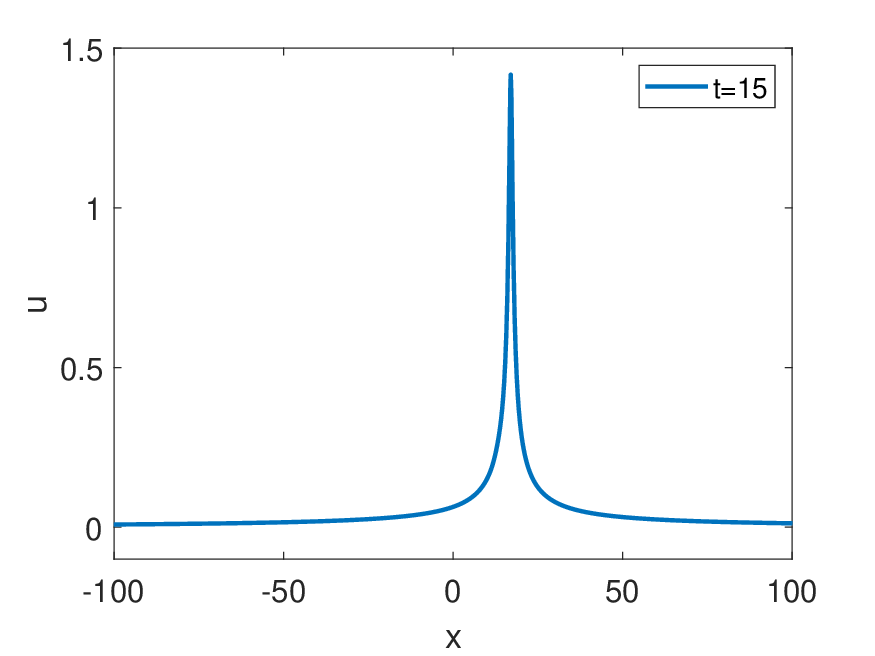}
 \end{minipage}
  \caption{Evolution of  a perturbed orbitally stable positive solitary wave with $\alpha=0.45$, $p=1$ and $c=1.1>c_1$.}
 \label{fig:stableb}
\end{figure}

Now, we focus on the interval $\frac{1}{3} < \alpha < \frac{1}{2}$ and $c>1$.
Here we have two subregions: For $c>c_1$, the solitary waves are orbitally stable whereas, for $1<c<c_1$ solitary waves are spectrally unstable. For $\alpha=0.45$ the critical wave speed is $c_1 \approx 1.04 $.
The evolution of the perturbed stable solution for $c=1.1>c_1$ is illustrated in Figure \ref{fig:stableb}.
Here we observe that the numerical result  agrees with the analytical  result of the stability.

\begin{figure}[htb!]
 \begin{minipage}[t]{0.45\linewidth}
   \includegraphics[width=3in]{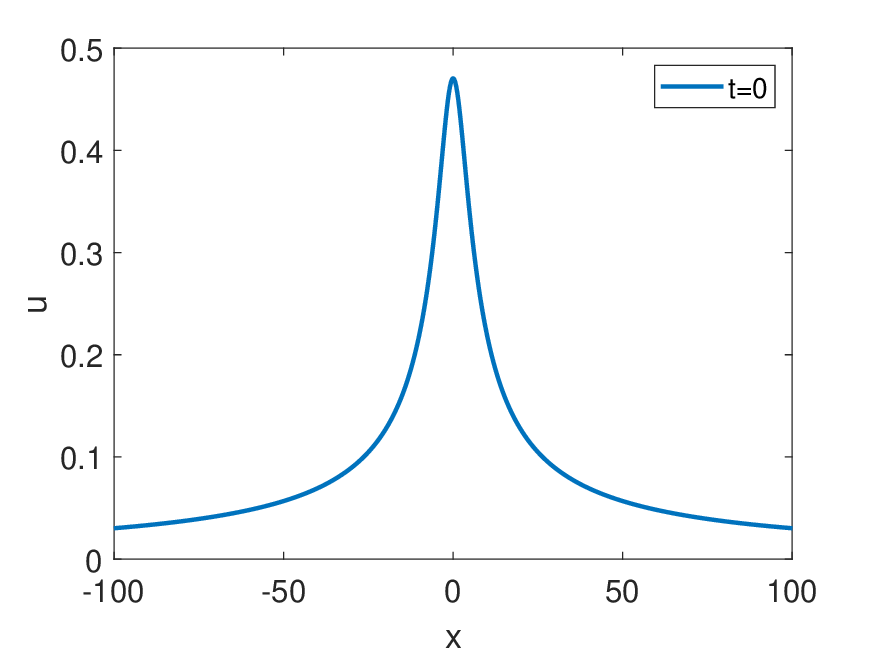}
 \end{minipage}
 \hspace{30pt}
\begin{minipage}[t]{0.45\linewidth}
   \includegraphics[width=3in]{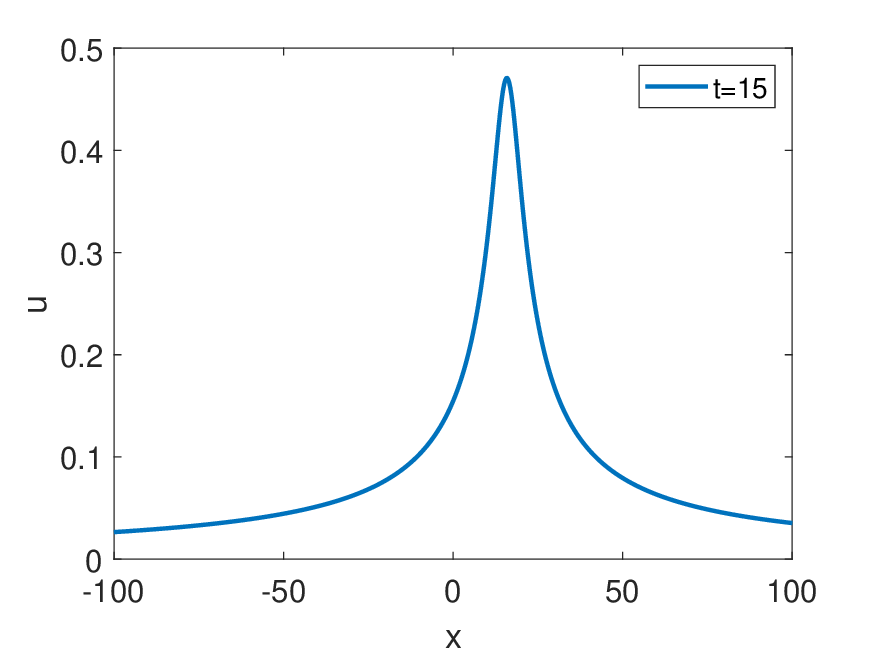}
 \end{minipage}
  \caption{Evolution of a spectrally unstable  perturbed positive solitary wave with $\alpha=0.45$, $p=1$ and $c=1.035<c_1$.}
 \label{fig:stablec}
\end{figure}

%\newpage
%\begin{figure}[htb!]
% \centering
% \includegraphics[width=3.5in]{a045c1035dif.eps}
%  \caption{The variation of the difference between numerical solution and perturbed solution in time, $\alpha=0.45$, $p=1$ and $c=1.035$}
%\label{fig:amplitude}
%\end{figure}

In Figure \ref{fig:stablec}, we investigate the time evolution of the perturbed solution for $\alpha=0.45$, $c=1.035<c_1$.  However, we could not observe a visible growth in the  solutions as time increases. The analytical results give the spectral instability but we study the nonlinear stability numerically.
One of the conditions to obtain that spectral instability implies orbital instability is that the mapping  $u_0\mapsto u(t)$ is smooth. For $\alpha\in (\frac{1}{3},\frac{1}{2})$, this is an open problem.
Therefore,
this observation does not imply a contradiction between the analytical and the numerical results.

\begin{figure}[htb!]
 \begin{minipage}[t]{0.45\linewidth}
   \includegraphics[width=3in]{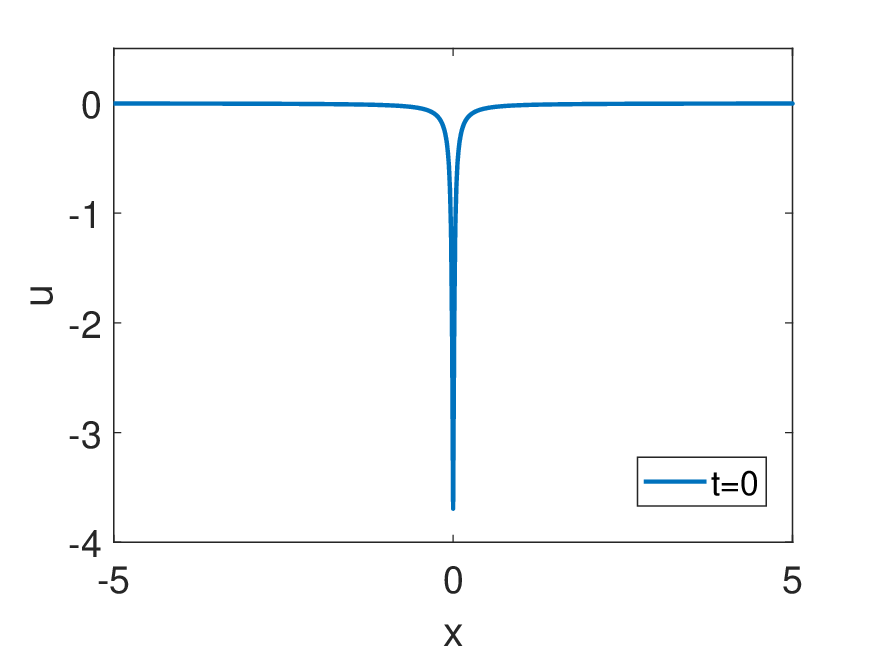}
 \end{minipage}
\hspace{30pt}
\begin{minipage}[t]{0.45\linewidth}
   \includegraphics[width=3in]{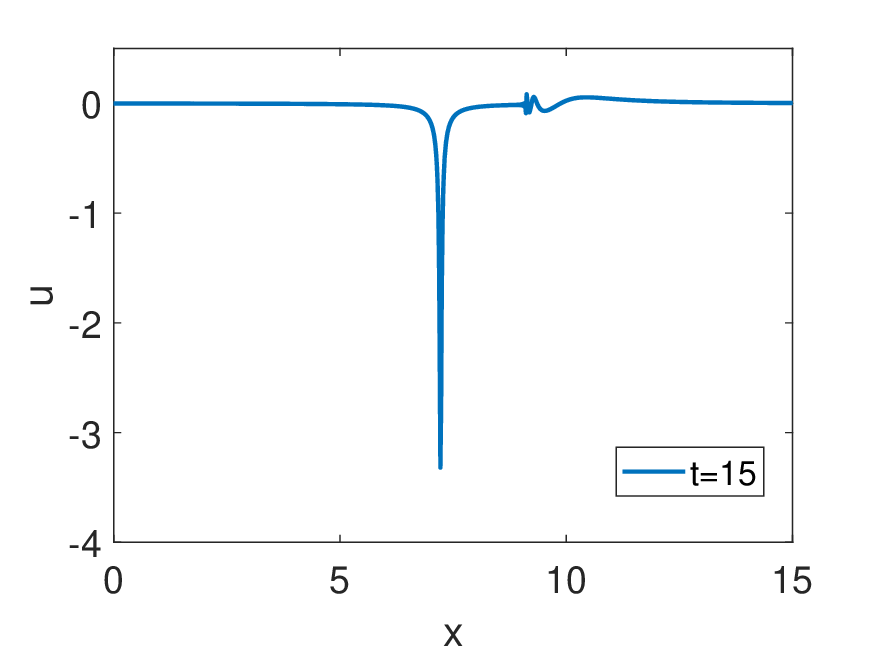}
 \end{minipage}
  \caption{Evolution of a spectrally unstable perturbed negative solitary wave solution with $\alpha=0.6$, $p=1$ and $c=0.5$}
 \label{fig:stablef}
\end{figure}

\indent The gfBBM equation has negative solitary wave solution when $c<\frac{3}{5}$ and $p=1$. The propagation of a perturbed negative solitary wave profile for $\alpha=0.6$ and $c=0.5$ is depicted in Figure \ref{fig:stablef}. The amplitude decreases slowly and  the small tail like oscillations appear by the time. As in the previous experiment, the numerical result does not imply a nonlinear instability.

%\newpage
%\bibliographystyle{unsrt}

\end{document}